\newtheorem{theorem}{\bf Theorem}[section]
\newtheorem{proposition}{\bf Proposition}[section]
\newtheorem{rmk}{\bf Remark}[section]
\newcommand{\be}{\begin{equation}}
\newcommand{\ee}{\end{equation}}
\def\RR{\mathbb{R}}
\def\NLk{N_k}
\def\mF{m_F}
\def\vk{\bar v^k}
\def\vl{\bar v^\ell}
\def\fs{f_{\infty}}
\def\gs{g_\infty}
\begin{document}

\title{Boltzmann games in heterogeneous \\consensus dynamics}

\author{Giacomo Albi \thanks{Department of Computer Sciences, University of Verona, Str. Le Grazie 15, 37134, Verona, Italy ({\tt giacomo.albi@univr.it})} \and Lorenzo Pareschi \thanks{Department of Mathematics and Computer Science, University of Ferrara, Via Machiavelli 35, 44121 Ferrara, Italy ({\tt lorenzo.pareschi@unife.it}).} \and
Mattia Zanella\thanks{Department of Mathematical Sciences, Politecnico di Torino, Corso Duca degli Abruzzi 24, Torino, Italy ({\tt mattia.zanella@polito.it}).}}

\maketitle

\begin{abstract}
We consider a constrained hierarchical opinion dynamics in the case of leaders' competition and with complete information among leaders. Each leaders' group tries to drive the followers' opinion towards a desired state accordingly to a specific strategy. By using the Boltzmann--type control approach we analyze the best--reply strategy for each leaders' population. Derivation of the corresponding Fokker-Planck model permits to investigate the asymptotic behaviour of the solution. Heterogeneous followers populations are then considered where the effect of knowledge impacts the leaders' credibility and modifies the outcome of the leaders' competition. \\

\textbf{Keywords}: multi-agent systems, differential games, Boltzmann equation, opinion leaders, consensus dynamics, knowledge.\\

\textbf{MSC}: 35Q20, 35Q91, 49N70.
\end{abstract}


\section{Introduction}
Control methodologies for multi--agent systems gained in recent years an increasing interest due to their effectiveness in modelling efficient techniques to force systems toward a given benchmark configuration \cite{ACFK,APTZ,FPR,HSP,HZ}. The control of emergent behaviour has been studied at the level of the agents' dynamics as at the level of the kinetic and hydrodynamic equations \cite{APZa,BFK,HSP,DLR}. The general setting consists in a microscopic model described by a system of ODEs where the evolution of the constrained state of each agent minimizes a given cost functional and is influenced by the collective behaviour of all the other agents.

When the number of agents is very large a direct solution of the microscopic problem becomes prohibitive and the behaviour of the system can be approximated by kinetic (or hydrodynamic) partial differential equations \cite{CFRT,CFTV,CPT,CT}.

The present work focus on hierarchical opinion controls based on several populations of leaders which compete in order to drive the opinion of the followers towards a desired state. Each leader acts via the minimization of a suitable cost function characterizing its strategy.

Unlike the approach proposed in \cite{APZa,DMPW}, in the present setting both the opinion variable and the control strategies of followers and leaders are always known to each \emph{player}, which makes the best possible decision taking into account what the opponents are doing, whereas the dynamics of the followers are influenced by the leaders and by the other followers. Hence, the introduced opinion model can be seen as a \emph{game} in which the opponents are the opinion leaders which try to maximize their impact over the followers' population, i.e. in other words the overall dynamics can be seen as a differential game \cite{C96,R65}. On the basis of a microscopic model we propose a Boltzmann--type model predictive control (MPC) approach following the ideas in \cite{AHP,APTZ,APZa}. By means of this technique we explicit the best reply dynamics at the binary level and we study the related kinetic description for a large number of agents. 

Next, in order to have a more realistic description of the opinion formation process, we introduce the followers' heterogeneity. In particular, we consider the impact of knowledge in the opinion dynamics by inducing resistance to social influence \cite{BOLRRF,HW}. The followers' behaviour is then described by different processes: the opinion dynamics depend on a knowledge based compromise process and on the interaction with each leaders' population, further, the evolution of the knowledge depends on a social background in which individuals may gain knowledge from more skilled agents \cite{BLW,BT,PT1,PT2,PVZ}. Numerical tests permit to show the effect of heterogeneity over the leaders' competition outcome.

The rest of the paper has the following structure: in Section \ref{sect:micro} we first introduce the constrained microscopic problem for an arbitrary number of leaders' populations, and next we exploit the model predictive approach to derive explicit binary interaction rules which embed the best--reply strategy for each leader.  The Boltzmann--type formulation of the binary game is given in Section \ref{sect:boltzmann}, the evolution of macroscopic quantities like the mean opinion and its variance are here investigated. Explicit asymptotic distributions are also reported in Section \ref{sect:FP} in the so--called quasi--invariant limit. In Section \ref{sect:FP_het} a multivariate model for heterogeneous followers based on the level of knowledge is proposed. Numerical results are shown in Section \ref{sect:num} and, beside confirming the theoretical analysis, show how different levels of knowledge in the followers' population may lead to different behaviours in the competition between leaders.

\section{A microscopic differential game with multiple leaders}\label{sect:micro}
We are interested in the opinion formation process of a population of followers influenced by the action of various leaders. The strategy of the leaders is driven by a control term characterised by the leaders' radical and populist attitudes \cite{AHP,APZa}. The first attitude aims at forcing the agents toward a given desired opinion, whereas the second attitude at keeping a position close to the mean opinion of the overall population. In contrast to \cite{APZa}, here the leaders compete with each others in agreement with their strategies thus giving rise to a differential game.

\subsection{Microscopic mean--field modelling}
 Let us consider a population of $N_F$ followers with opinion $w_i\in \mathcal I$, where $\mathcal I = [-1,1]$ and $M$ groups of leaders $v^k_{h}$ with opinion $h=1,\dots,\NLk$ for the population in the group $k = 1,\dots,M$. 

We consider the dynamics described by the following equations
\be\begin{split}\label{eq:dyn}
& \dot w_i = \dfrac{1}{N_F}\sum_{j=1}^{N_F} P(w_i,w_j)(w_j-w_i)+ \sum_{\ell=1}^M\Bigg(\dfrac{1}{N_\ell}\sum_{h=1}^{N_\ell}R^\ell(w_i,v_{h}^\ell)(v_{h}^\ell-w_i)\Bigg)\\
& \dot{v}_{h}^k = \dfrac{1}{\NLk} \sum_{p = 1}^{\NLk}S^k(v_{h}^k,v_p^k)(v_p^k-v_{h}^k)+\sum_{\ell=1}^M u^\ell
\end{split}\ee
with given initial conditions, $i=1,\dots,N_F$, $k=1,\dots,M$ and $h = 1,\dots,\NLk$. 
In \eqref{eq:dyn} we introduced the compromise functions $P(\cdot,\cdot)\in[0,1]$ and $R^k(\cdot,\cdot)\in[0,1]$ measuring respectively the strength of interaction between followers and the strength of interaction between followers and leaders of the $k$th-population. Further, $S^k(\cdot,\cdot)\in[0,1]$ measures the strength of interaction between leaders of population $k$, which we will assume to be {\em symmetric}, i.e. $S^k(v,w)=S^k(w,v)$ for every $k=1,\ldots,M$.

The controls $u^k$ characterize the strategies of the leaders of the system and are solution of the control problems
\be
u^k = \textrm{arg}\min_{u^k\in\mathcal U} J^k(u^k,{\bf u} ^k_{-};\textbf{w},\textbf{v}),\qquad k=1,\dots,M
\ee
where $\mathcal U$ is the set of admissible controls and the cost functional $ J^k(u^k,{\bf u} ^k_{-};\textbf{w},\textbf{v})$ is defined as follows
\be\begin{split}\label{eq:J}
 &J^k(u^k,{\bf u} ^k_{-};\textbf{w},\textbf{v}) =\\
 &\quad \dfrac{1}{2}\displaystyle \int_{0}^T \left[ \dfrac{\psi^k}{\NLk}\sum_{h=1}^{\NLk}|v_{h}^k-\vk|^2 + \dfrac{\mu^k}{\NLk} \sum_{h=1}^{\NLk}|v_{h}^k-\mF|^2 + \nu^k |u^k|^2 \right]ds,
\end{split}\ee
being ${\bf u} ^k_{-}$ the vector of the other leaders, $k$ excluded.
In \eqref{eq:J} the terms $\textbf{w},\textbf{v}$ are the vectors of the opinions of the followers and all the leaders respectively, $T$ is the final time horizon of the optimization procedure, $\vk\in[-1,1]$ are desired opinions of the $k$th group of leaders. We also indicated with $\mF=\mF(t)\in[-1,1]$ the mean opinion of the followers at time $t\geq0$, and equivalently we will indicate with  $m_L^k=m_L^k(t)\in[-1,1]$ the mean opinion expressed by the $k$th population of leaders, both quantities are computed as follows
\begin{equation}\label{eq:MFave}
\mF(t) = \dfrac{1}{N_F}\sum_{j=1}^{N_F} w_j(t), \quad m_L^k(t) = \dfrac{1}{N_k}\sum_{h=1}^{N_k} v^k_h(t).
\end{equation}
The parameter $\nu^k>0$ is a regularization term representing the importance of the control term in the overall dynamics. The coefficients $\psi^k,\mu^k$, weight two possible behaviours: $\psi^k$ is related to the {\em radical} attitude of maintaining a certain desired position $\vk$, whereas $\mu^k$ weights the tendency of getting closer to the average opinion of the followers' population, $\mF$, we refer to this behaviour as {\em populist}. We further assume that the leaders' strategies are a convex combination of these two behaviours, i.e. $\psi^k+\mu^k=1$.

\begin{rmk}
A further generalization of the model consists in considering an additional parameter $\theta_{k\ell}\in[0,1]$ weighting the influence of each strategy $u^\ell$, $\ell=1,\ldots,M$ on the $k$th leaders' population, namely we have 
\be\begin{split}\label{eq:dyntheta}
& \dot{v}_{h}^k = \dfrac{1}{\NLk} \sum_{j = 1}^{\NLk}S^k(v_{h}^k,v_j^k)(v_j^k-v_{h}^k)+\sum_{\ell=1}^M\theta_{k\ell} u^\ell.
\end{split}\ee
Parameters $\theta_{k\ell}$ can describe situations where the knowledge of other leaders' strategies is incomplete or biased by uncertainties.
Note that for $ \bm{\Theta} =[\theta_{k\ell}] = Id_{M\times M}$ model \eqref{eq:dyn} decouples in a system of leaders where every leaders' population has its own strategy, as originally presented in \cite{APZa}.
\end{rmk}

\subsection{Binary game approximation}
To reduce the model's complexity we consider first a discrete version of \eqref{eq:dyn} taking into account binary interactions. Hence, we introduce the time discretization $ 0=t^0 <t^1<\ldots<t^{N_{t}}=T$ with $t^n = n\Delta t$, the parameter $\alpha = \Delta t/2$, and we approximate the differential system \eqref{eq:dyn} via pairwise interactions. We obtain the following binary dynamics among each population of leaders $k=1,\dots,M$ 
\be\begin{split}\label{eq:bin_leaders}
v_h^{k,n+1}=v_h^{k,n} + \alpha S^k(v_h^{k,n},v_p^{k,n})(v_p^{k,n}-v_h^{k,n}) + 2\alpha\sum_{\ell=1}^M u^{\ell,n},  \\
v_p^{k,n+1}=v_p^{k,n} + \alpha S^k(v_p^{k,n},v_h^{k,n})(v_h^{k,n}-v_p^{k,n}) + 2\alpha\sum_{\ell=1}^M u^{\ell,n},  \\
\end{split}\ee
where the controls $u^{k,n}$ are now determined by solving  the optimization problem on the binary dynamics. The cost functional over the binary interactions is given by 
\be
\begin{split}\label{eq:disc_funcJu0}
&J^{k,n}(u^k,{\bf u} ^k_{-}) : =\\
&\quad \int_{t^n}^{t^{n+1}} \frac{1}{2N_k} \sum_{h=1}^{N_k}\left(\psi^k|v^{k}_{h}-\vk|^2+\mu^k|v_{h}^{k}-\mF|^2+\nu^k |u^{k}|^2\right)\ dt, 
\end{split}
\ee
thus the control variables $u^{k,n}$ for $k=1,\ldots,M$ are solution to the following coupled system
\be\begin{split}\label{eq:diff_game}
u^{1,n}& = \textrm{arg}\min J^{1,n}(u^{1,n},{\bf u}_{-}^{1,n})\\
u^{2,n} &= \textrm{arg}\min J^{2,n}(u^{2,n},{\bf u}_{-}^{2,n})\\
&\,\,\vdots\\
u^{M,n} &= \textrm{arg}\min J^{M,n}(u^{M,n},{\bf u}_{-}^{M,n}).
\end{split}\ee

Note that this type of problem has the structure of a differential game, and therefore the controls  should be determined as an equilibrium of the competition process, \cite{R65, C96}. 
In what follows we show how to determine an approximation of the solution of this problem through the derivation of a best-reply approach \cite{AHP, APZa,DHR}. 

\subsubsection{Binary best-reply strategies}
A direct method to embed the explicit formulation of best-reply strategies in the dynamics given by \eqref{eq:diff_game} is to introduce a system of Euler-Lagrange equations, and to compute the minimizers of the resulting equations, see also \cite{DHR,ACFK} for further details. Similarly to instantaneous strategies \cite{AHP,APZa}, we approximate the solution of \eqref{eq:diff_game} by introducing a semi-implicit time-discretization of \eqref{eq:disc_funcJu0} as follows
\be\begin{split}\label{eq:disc_funcJu}
&\tilde J^{k,n}(u^{k,n},{\bf u} ^{k,n}_{-})=\\
&\quad \frac{1}{2N_k} \sum_{h=1}^{N_k}\left(\psi^{k}|v^{k,n+1}_h-\vk|^2+\mu^{k}|v^{k,n+1}_h-\mF^n|^2 +\nu^{k} |u^{k,n}|^2\right),
\end{split}\ee
where we assume to evaluate $\mF(t)$ in \eqref{eq:MFave} at time $t^n$.
Hence, by computing the minimizers of  \eqref{eq:disc_funcJu} w.r.t. $u^{k,n}$ for all $k=1,\ldots,M$, we obtain 
\begin{align*}
&D_{u^{k,n}} \tilde J^{k,n}(u^{k,n}) =\\
&\quad \frac{1}{N_k} \sum_{h=1}^{N_k}\left(2\alpha \psi^{k}(v^{k,n+1}_h-\vk)+2\alpha\mu^{k}(v^{k,n+1}_h-\mF^n) +\nu^{k}u^{k,n}\right)=
\\
&\quad \frac{2\alpha }{N_k} \sum_{h=1}^{N_k} \left(\psi^{k}(v^{k,n}_h-\vk)+\mu^{k}(v^{k,n}_h-\mF^n)\right) + \frac{2\alpha^2}{N_k} \sum_{h=1}^{N_k} S^k(v_h^{k,n},v_p^{k,n})(v_p^{k,n}-v_h^{k,n})
\\
&\quad+ 4\alpha^2\sum_{\ell=1}^M u^{\ell,n}+ \nu^{k} u^{k,n},
\end{align*}
where we substitute $v^{k,n+1}$  with its explicit formulation \eqref{eq:bin_leaders}. Note also that, under symmetricity assumption of $S^k(\cdot,\cdot)$, the contribution of the sum of the interactions $S^k(v_h,v_p)(v_p-v_h)$ vanishes, since it is composed by binary terms such that for every pair $(h,p)$ we have 
\[\begin{split}
&S^k(v_h^{k},v_p^{k})(v_p^{k}-v_h^{k}) + S^k(v_p^{k},v_h^{k})(v_h^{k}-v_p^{k})=\\
&\qquad \left(S^k(v_h^{k},v_p^{k})-S^k(v_h^{k},v_p^{k})\right)(v_p^{k}-v_h^{k})=0. 
\end{split}\]
Thus, the previous expression reduces to 
\begin{align*}
&D_{u^{k,n}} \tilde J^{k,n}(u^{k,n}) =\\
&\qquad 2\alpha \left(\psi^{k}(m^{k,n}_L-\vk)+\mu^{k}(m^{k,n}_L-\mF^n)\right) + 4\alpha^2\sum_{\ell=1}^M u^{\ell,n}+ \nu^{k} u^{k,n},
\end{align*}
where $m_L^k(t)$ are the average opinion of the $k$th-population of leaders at time $t$. 

We find the critical points of the functional imposing the last relation to be zero, 
\be\begin{split}\label{eq:bin_ctrlAm}
\left(\nu^k + 4\alpha^2\right)u^{k,n} &+ 4\alpha^2\sum_{\ell\neq k}u^{\ell,n} =\\
& 2\alpha \left(\psi^{k}\left(\vk-m^{k,n}_L\right)+\mu^{k}\left(\mF^n-m^{k,n}_L\right)\right),
\end{split}\ee
where the second member  of the expression contains the convex combination of the relaxation towards the desired state $\vk$ and the average of the followers' opinions, $\mF$.  Denoting by $F_\alpha^{k,n}$ the right-hand side of \eqref{eq:bin_ctrlAm} , and introducing the parameter $\beta^{k}$ as follows
\begin{align}\label{eq:strategym}
F^{k,n}:= \psi^{k}\vk+\mu^{k}\mF^n-m^{k,n}_L,\quad \beta^{k} := \dfrac{4\alpha^2}{\nu^{k} + 4\alpha^2},
\end{align}
 we can write system \eqref{eq:bin_ctrlAm},  for $k=1,\ldots,M$, as  
\begin{align}\label{eq:bin_ctrlBm}
& u^{k,n} +\beta^{k} \sum_{\ell \neq k}^M u^{\ell,n}=\frac{\beta^k}{2\alpha}F^{k,n},
\end{align}
or equivalently in vector notation as
\begin{align}\label{syscontrolm}
\begin{bmatrix}
1& \beta^1& \beta^1&\ldots &\beta^1\\
\beta^2& 1& \beta^2&\ldots &\beta^2\\
\vdots&\vdots&\vdots&&\vdots\\
 \beta^M& \beta^M&\beta^M&\ldots &1
\end{bmatrix}
\begin{bmatrix}
u^{1,n}\\
u^{2,n}\\
\vdots\\
u^{M,n}
\end{bmatrix}
=
\frac{1}{2\alpha}
\begin{bmatrix}
\beta^1F^{1,n}\\
\beta^2F^{2,n}\\
\vdots\\
\beta^MF^{M,n}
\end{bmatrix}.
\end{align}
The existence and uniqueness of the solution can be assured under the following sufficient condition
\begin{proposition}
System  \eqref{eq:bin_ctrlBm} admits an unique solution if the following condition
\begin{align}\label{suffcondm}
\nu^k> 4(M-2)\alpha^2,
\end{align}
holds true for every $k=1,\ldots M$.
\end{proposition}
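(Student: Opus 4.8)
The plan is to read \eqref{eq:bin_ctrlBm} (equivalently \eqref{syscontrolm}) as a square linear system $A\,\mathbf{u}^n=\mathbf{b}^n$ in the unknown vector $\mathbf{u}^n=(u^{1,n},\dots,u^{M,n})^T$, whose coefficient matrix $A=[a_{k\ell}]$ has $a_{kk}=1$ on the diagonal and $a_{k\ell}=\beta^k$ off the diagonal. Since a square system admits a unique solution precisely when its coefficient matrix is nonsingular, the statement reduces to showing that \eqref{suffcondm} forces $A$ to be invertible.

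I would establish invertibility through strict diagonal dominance. The definition \eqref{eq:strategym} gives $\beta^k=4\alpha^2/(\nu^k+4\alpha^2)\in[0,1)$, so each row of $A$ has diagonal entry of modulus $1$ and $M-1$ off-diagonal entries each equal to $\beta^k$. By the Levy--Desplanques theorem (a consequence of the Gershgorin circle theorem), a matrix that is strictly diagonally dominant by rows, that is
\begin{align*}
|a_{kk}|>\sum_{\ell\neq k}|a_{k\ell}|\quad\Longleftrightarrow\quad 1>(M-1)\,\beta^k,\qquad k=1,\dots,M,
\end{align*}
is nonsingular. It then only remains to rewrite this row condition in terms of $\nu^k$: substituting $\beta^k$ and clearing the denominator, $1>(M-1)\beta^k$ is equivalent to $\nu^k+4\alpha^2>(M-1)\,4\alpha^2$, i.e.\ to $\nu^k>4(M-2)\alpha^2$, which is exactly \eqref{suffcondm}. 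Hence the hypothesis yields strict diagonal dominance of $A$, and with it existence and uniqueness of the controls.

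I do not anticipate a genuine obstacle: the core of the argument is a single invocation of a classical nonsingularity criterion, and the only conceptual point is that \eqref{suffcondm} is sufficient but not necessary. This last remark is easy to make precise with the matrix determinant lemma: writing $A=\mathrm{diag}(1-\beta^k)+\boldsymbol{\beta}\mathbf{1}^T$ with $\boldsymbol{\beta}=(\beta^1,\dots,\beta^M)^T$, one finds
\begin{align*}
\det A=\Bigg(\prod_{k=1}^M(1-\beta^k)\Bigg)\Bigg(1+\sum_{k=1}^M\frac{4\alpha^2}{\nu^k}\Bigg)>0
\end{align*}
for all admissible parameters, so the system is in fact always uniquely solvable; the diagonal-dominance route is preferable only because it delivers the clean, directly verifiable bound \eqref{suffcondm}.
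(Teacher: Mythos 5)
Your proof is correct and takes essentially the same route as the paper, which obtains \eqref{suffcondm} in exactly this way: imposing strict diagonal dominance by rows of the matrix in \eqref{syscontrolm}, i.e.\ $1>(M-1)\beta^k$, and invoking the standard nonsingularity criterion for strictly diagonally dominant matrices. Your closing determinant computation, $\det A=\bigl(\prod_{k=1}^M(1-\beta^k)\bigr)\bigl(1+\sum_{k=1}^M 4\alpha^2/\nu^k\bigr)>0$ for all $\nu^k>0$, is a correct observation beyond the paper showing the condition is sufficient but not necessary for invertibility alone---though note the dominance property itself (not mere solvability) is what the paper reuses later, e.g.\ in the proposition on preservation of the opinion bounds.
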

This condition is obtained by imposing the strictly diagonal dominance of the matrix of system \eqref{syscontrolm}. Moreover, recalling the equivalence between $\alpha$ and $\Delta t/2$, condition \eqref{suffcondm} may be seen as a bound on the time-stepping of the binary interaction, i.e. 
\begin{align}\label{suffcond2}
\Delta t<\sqrt{\nu^k/(M-1)},\quad k=1,\ldots,M.
\end{align}
Under this condition we can invert system \eqref{eq:bin_ctrlBm} and the control $u^k$ is obtained as follows 
\begin{align}\label{eq:bin_ctrlB_solm}
u^{k,n} =\frac{1}{2\alpha}\sum_{\ell=1}^M\beta^{\ell}B^\alpha_{k,\ell} F^{\ell,n},
\end{align}
where the coefficients ${\bf B}^\alpha = [B^\alpha_{k,\ell}]$ define the inverse matrix of system  \eqref{syscontrolm}. Since the control applied to each leaders is the sum over $k$ of the strategies $u^k$, in order to compute the control term is sufficient to sum over $k$ relation \eqref{eq:bin_ctrlB_solm} as follows
\begin{align}\label{eq:bin_ctrlB_sol_avem}
\sum_{k=1}^Mu^{k,n}  = \frac{1}{2\alpha}\sum_{\ell=1}^M\beta^{\ell}\bar B^\alpha_{\ell} \left(\psi^{\ell}\vl+\mu^{\ell}\mF^n-m_L^{\ell,n}\right),
\end{align}
where we substituted the explicit version of $F^{\ell,n}$ \eqref{eq:strategym}, and with $\bar B^\alpha_\ell$ defined as
\be\label{eq:sumB}
\bar B^\alpha_{\ell} := \sum_{k=1}^M B^\alpha_{k\ell}.
\ee
Hence, to obtain the strategy $\sum_{k=1}^Mu^{k,n}$ it is sufficient to compute once ${\bf B}^\alpha$, and to update at each time step the binary interaction \eqref{eq:bin_leaders} embedding the control \eqref{eq:bin_ctrlB_sol_avem}.
\\

In the following sections we will show how these binary constrained dynamics can be embedded in Boltzmann-type equations, and how to derive a consistent system of Fokker-Planck-type equations for the mean-field system \eqref{eq:dyn}.
\begin{rmk}
In the simplified setting in which the penalization parameters are uniformly equal, $\nu^1=\nu^2=\ldots=\nu^M = \nu$, since the dynamics of the leaders are controlled by the sum over all the strategies, the previous system is easily solved.
Then in this case $\beta^k=\beta$ for every $k=1,\ldots,M$ and summing over $k$ equation \eqref{eq:bin_ctrlBm} we have
\begin{align}\label{eq:bin_avesystem}
&(1-\beta)\bar u^{n} +\beta M \bar u^{n}=\frac{\beta}{2\alpha}\sum_{\ell=1}^M \left(\psi^{\ell}\vl+\mu^{\ell}\mF^n-m_L^{\ell,n}\right),
\end{align}
where we defined $\bar u= \sum_{k=1}^M u^k$, which restitutes the following control 
\begin{align}\label{eq:bin_avesol}
&\bar u^{n}  = \frac{\beta}{2\alpha(1+(M-1)\beta)}\sum_{\ell=1}^M \left(\psi^{\ell}\vl+\mu^{\ell}\mF^n-m_L^{\ell,n}\right).
\end{align}
\end{rmk}

\begin{rmk}\label{rmk:localfunc}
A different approach of deriving binary strategies is to consider a different functional with respect to \eqref{eq:disc_funcJu}, which rules only locally the binary discrete interactions of the leaders \cite{AHP,APZa}, as follows
\be\begin{split}\label{eq:disc_funcJuM}
&\tilde J^{k,n}(u^{k,n},{\bf u} ^{k,n}_{-})=\\
&\quad\frac{1}{2} \sum_{r\in\{h,p\}}\left(\psi^{k}|v^{k,n+1}_r-\vk|^2+\mu^{k}|v^{k,n+1}_r-\mF^n|^2 +\nu^{k} |u^{k,n}|^2\right).
\end{split}\ee
Thus computing the minimizers of  \eqref{eq:disc_funcJuM} w.r.t. $u^{k,n}$ for all $k=1,\ldots,M$, leads to the following system 
\begin{align}\label{eq:bin_ctrlB}
&u^{k,n} +\beta^{k} \sum_{\ell\neq k}^M u^{\ell,n}=\frac{\beta^k}{2\alpha}\left(\psi^{k}\vk+\mu^{k}\mF^n-\frac{v_h^{k,n}+v_p^{k,n}}{2}\right),
\end{align}
which shares the same structure of system \eqref{eq:bin_ctrlBm}, with the difference that the right-hand side 
includes the relaxation towards the desired state $\psi^{k}\vk+\mu^{k}\mF^n$ of the {\em local average} of two leaders' opinions defined as follows 
\be\label{eq:locave}
\hat{m}^{k,n}_L: = \frac{v_p^{k,n}+v_h^{k,n}}{2}.
\ee
Thus under the same condition \eqref{suffcondm} we can compute the sum of the strategies as follows
\be\label{eq:bin_ctrlB_sol_locavem}
\sum_{k=1}^Mu^{k,n}  = \frac{1}{2\alpha}\sum_{\ell=1}^M\beta^{\ell}\bar B^\alpha_{\ell} \left(\psi^{\ell}\vl+\mu^{\ell}\mF^n- \frac{v_p^{\ell,n}+v_h^{\ell,n}}{2}\right).
\ee
\end{rmk}

\section{The Boltzmann game}\label{sect:boltzmann}
In this section we consider a Boltzmann-type dynamics to describe the evolution of the binary game approximation \eqref{eq:bin_leaders}--\eqref{eq:disc_funcJu0}. Let $f=f(w,t)$ be the  density at time $t\ge 0$ for the agents with opinion $w\in I=[-1,1]$, which we assume to be normalized such that $\int_{ I} f(w,t)dv =1$. Furthermore, let $g^k(v,t)$ be the density of the leaders' population $k=1,\dots,M,$
\be\label{eq:rhop}
\int_{ I} g^k(v,t)dv = \rho^k\le 1,
\ee
thus, using the same notations as in Section \ref{sect:micro}, we define the average opinion of followers'  and leaders' population respectively
\begin{align}\label{eq:mF}
\mF(t):=\int_{ I} w f(w,t)dw,\qquad
m_L^k(t):= \frac{1}{\rho_k}\int_{ I} v g^k(v,t)dv.
\end{align}
In the following we will derive the kinetic description for the evolution of the densities $f$ and $g^k$ through classical methods of kinetic theory \cite{PT2,T, AHP, APZa}.

\subsection{Binary interactions dynamics}
We consider binary interaction of opinions, both for the dynamics of the followers and the leaders, in presence of additional random processes. This modelling choice aims to include exogenous factors which can not be described by the constrained deterministic process.

More precisely, the post-interaction opinions of two leaders within the group $k$, are obtained as follows 
\be\begin{split}\label{eq:binary_leaderp}
v^{\prime} &= v + \alpha S^k(v,v_*)(v_*-v)+2\alpha \sum_{\ell=1}^M u^\ell + \eta^k D^k_L(v),\\
{v_*}^{\prime} &= v_* + \alpha S^k(v_*,v)(v-v_*)+2\alpha \sum_{\ell=1}^Mu^\ell + \eta^k_* D^k_L(v_*),
\end{split}\ee
where the best reply strategy $ \sum_{\ell=1}^M u^\ell$ is the same for every population of leaders, and under assumption \eqref{suffcondm}, is defined for every $\ell=1,\ldots,M$ by \eqref{eq:bin_ctrlB_sol_avem}.
The random variables $\eta^k,\eta^k_* $, with law $\Theta_{\eta^k}(\cdot)$, have zero mean and standard deviation $\sigma_{\eta^k}$, 
whereas the function $0\le D^k_L(\cdot)\le 1$ represents the local relevance of diffusion of the leaders' population. 

In order to describe the  interaction among followers as a binary exchange of informations we split the interaction among follower-follower, induced by the kernel $P(\cdot,\cdot)$, and follower-leader interaction  induced by $R^k(\cdot,\cdot)$, for each leaders' population $k =1,\ldots,M$.

Thus, the interactions between followers are given by 
\be\begin{split}\label{eq:binary_follower_opinion}
&w^{\prime} = w+\alpha P(w,w_*)(w_*-w)+\xi D_F(w),\\
&w_*^{\prime} = w_*+\alpha P(w_*,w)(w-w_*)+\xi_* D_F(w_*),
\end{split}\ee
where $(w,w_*)\in I$ are the pre-interaction opinions and $w^{\prime},w_*^{\prime}$ the opinions after the exchange of informations between the two agents. Moreover, $0\le D_{F}(\cdot)\le 1$ is a local diffusion function which scales the strength of the random variables $\xi,\xi_*$ in the binary dynamics, whereas $\xi,\xi_*$ have law $\Theta_\xi(\cdot)$ with zero mean and standard deviation $\sigma_\xi$.

Finally, each interaction between leaders and followers is given by the binary dynamics
\be\begin{split}\label{eq:binary_follower_leader}
&w^{\prime\prime} = w+\alpha R^k(w,v)(v-w)+\xi^k D^k_{FL}(w), \\
&{v}^{\prime\prime}=v,
\end{split}\ee
where the local diffusion functions $0\le D^k_{FL}(\cdot)\le 1$ modulate the impact of the random variables $\xi^k$ on the binary dynamics. The random variables $\xi^k$  have law $\Theta_{\xi^k}(\cdot)$ and standard deviations $\sigma_{\xi^k}$.

In the following we give sufficient conditions to preserve the opinion bounds in the aforementioned  dynamics.
\begin{proposition}
Let us consider the binary dynamics described by \eqref{eq:binary_leaderp}-\eqref{eq:binary_follower_leader}. 
\begin{enumerate}
\item 
For each $k=1,\dots,M$ let us define the quantities $c^k_{\pm}$, $d_{\pm}^k$ as follows
\[
c_{\pm}^k = \min_{v\in [-1,1]}\left\{ \dfrac{\pm 3\sum_{\ell=1}^M \beta^\ell \bar{B}^{\alpha}_{\ell}}{D^k_L(v)}, D^k_L(v)\ne 0\right\},\qquad
d_{\pm}^k = \min_{v\in [-1,1]}\left\{ \dfrac{1\mp v}{D_L^k(v)} \right\}.
\]
Therefore, if $v,v_*\in I$ then $v^\prime, v_*^\prime\in I$ provided 
\[
d_-^k - c^k_-\le \eta^k \le d_+^k -c^k_+.
\]
\item Let $F_{\pm}$ be defined as follows
\[
F_{\pm} = \min_{w\in[-1,1]}\left\{\dfrac{1\mp w}{D_{F}(w)}, D_{F}(w)\ne 0 \right\}.
\]
Therefore, if $w,w_*\in I$ then $w^{\prime},w'_*\in I$ provided
\[
(1-\alpha)F_-\le \xi \le (1-\alpha)F_+.  
\]
\item Let $L_{\pm}^k$ be defined for all $k=1,\dots,M$ as follows
\[
L^k_{\pm} = \min_{w\in[-1,1]}\left\{\dfrac{1\mp w}{D_{FL}^k(w)}, D_{FL}^k(w)\ne 0 \right\}.
\]
Therefore, if $w\in I$ then $w^{\prime\prime}$ provided
\[
(1-\alpha)L^k_-\le \xi^k \le (1-\alpha)L^k_+.  
\]
\end{enumerate}
\end{proposition}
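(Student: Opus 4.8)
The plan is to treat the three binary rules in a unified way: in each case the deterministic (noiseless) update is an affine interpolation between the pre-interaction opinions, hence a convex combination that already lies in $I=[-1,1]$, so that the only thing that can push the post-interaction opinion out of $I$ is the additive random term (and, for the leaders, the control). It therefore suffices to bound the noise by the residual distance to the boundary $\{-1,1\}$ and then extremize over the opinion variable appearing in the diffusion function. Throughout I would use the standing requirement $\alpha\le 1$, needed so that the interpolation coefficients are nonnegative.

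For the followers' rule \eqref{eq:binary_follower_opinion} I would write $w' = (1-\alpha P(w,w_*))\,w + \alpha P(w,w_*)\,w_* + \xi D_F(w)$. Since $P\in[0,1]$ and $\alpha\le1$, the two coefficients are nonnegative and sum to one, so the deterministic part is a convex combination of $w,w_*\in I$ and lies in $I$. To force $w'\le1$ I would impose $\xi D_F(w)\le 1-[(1-\alpha P)w+\alpha P w_*]$; rewriting the right-hand side as $(1-\alpha P)(1-w)+\alpha P(1-w_*)$ and minimizing it over $w_*\in[-1,1]$ and $P\in[0,1]$ (worst case $w_*=1$, $P=1$) gives the uniform sufficient condition $\xi D_F(w)\le(1-\alpha)(1-w)$, i.e. $\xi\le(1-\alpha)\tfrac{1-w}{D_F(w)}$ whenever $D_F(w)\ne0$; imposing this for every $w$ yields $\xi\le(1-\alpha)F_+$. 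The lower bound $w'\ge-1$ is symmetric, starting from $1+w'=(1-\alpha P)(1+w)+\alpha P(1+w_*)+\xi D_F(w)$ and extremizing, and together the two recover the two-sided constraint in terms of $F_\pm$. Part (3), for the follower–leader rule \eqref{eq:binary_follower_leader}, is identical with the replacements $P\mapsto R^k$, $D_F\mapsto D^k_{FL}$, $w_*\mapsto v$, producing $L^k_\pm$.

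The only genuinely new ingredient, and the main obstacle, is part (1) for the leaders, because \eqref{eq:binary_leaderp} carries the extra control term $2\alpha\sum_{\ell}u^\ell$ that is not part of the convex combination. Here I would split $v'=[(1-\alpha S^k)v+\alpha S^k v_*]+2\alpha\sum_{\ell}u^\ell+\eta^k D^k_L(v)$, the bracket again lying in $I$, and then bound the control uniformly: substituting the explicit solution \eqref{eq:bin_ctrlB_sol_avem}, namely $2\alpha\sum_{\ell}u^\ell=\sum_{\ell}\beta^\ell\bar B^\alpha_\ell(\psi^\ell\bar v^\ell+\mu^\ell\mF-m_L^\ell)$, which is well defined under condition \eqref{suffcondm}, and using $|\bar v^\ell|,|\mF|,|m_L^\ell|\le1$ to obtain $|\psi^\ell\bar v^\ell+\mu^\ell\mF-m_L^\ell|\le3$, so that $\big|2\alpha\sum_\ell u^\ell\big|\le 3\sum_\ell\beta^\ell\bar B^\alpha_\ell$. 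With the relaxation part in $I$ and the control confined to an interval of half-width $3\sum_\ell\beta^\ell\bar B^\alpha_\ell$, the requirement $v'\in I$ becomes a pointwise-in-$v$ constraint on $\eta^k D^k_L(v)$: the noise must absorb both the boundary gaps $1\mp v$ and the worst-case control displacement $\pm 3\sum_\ell\beta^\ell\bar B^\alpha_\ell$. Dividing by $D^k_L(v)$ and minimizing over $v$ produces exactly the thresholds $d^k_\pm$ and $c^k_\pm$, giving $d^k_- - c^k_- \le \eta^k \le d^k_+ - c^k_+$; the bound for $v_*'$ then follows from the symmetry of the roles of $v,v_*$ and of $\eta^k,\eta^k_*$.

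The steps I would double-check most carefully are the sign bookkeeping when combining the two one-sided constraints into the stated $\pm$-form (the extremization turns a maximum over the opinion variable into the minimum appearing in $F_\pm,L^k_\pm,c^k_\pm,d^k_\pm$), the standing assumption $\alpha\le1$ underlying every convex-combination step, and the role of condition \eqref{suffcondm} in guaranteeing that the coefficients $\bar B^\alpha_\ell$ are finite, without which the control bound in part (1) would be vacuous.
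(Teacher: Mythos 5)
Your proposal is correct and is essentially the argument the paper itself gestures at: the paper's entire proof is a one-line remark that the claim follows from the compromise functions lying in $[0,1]$ (exactly the convex-combination structure you exploit in parts 2 and 3) together with the strict diagonal dominance of $\mathbf{B}^{\alpha}$ under \eqref{suffcondm} (exactly what makes your control bound $\bigl|2\alpha\sum_{\ell}u^{\ell}\bigr|\le 3\sum_{\ell}\beta^{\ell}\bar B^{\alpha}_{\ell}$ via \eqref{eq:bin_ctrlB_sol_avem} meaningful), with all details omitted and deferred to the cited references. In effect you have reconstructed, along the same route, the details the paper leaves out, including the correct reading of the thresholds $F_{\pm}$, $L^k_{\pm}$, $c^k_{\pm}$, $d^k_{\pm}$ as worst-case extremizations over the opinion variable in the diffusion functions.
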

\begin{proof}
Since all compromise functions lie in the interval $[0,1]$, the proof is a straightforward consequence of the results in \cite{AHP,APTZ,APZa,PT2,T} and of the fact that $\mathbf B^{\alpha}$ is strictly diagonally dominant. We omit the details.
\end{proof}

\subsection{The Boltzmann--type description}
The evolution in time of followers'  and leaders' density functions, $f(w,t), g^k(v,t)$, is given by the following system of integro--differential equations of the Boltzmann-type 
\be\begin{split}\label{eq:system_Q}
&\dfrac{\partial }{\partial t}f(w,t) = Q_F(f,f)(w,t)+\sum_{\ell=1}^M Q_{FL}^{\ell}(f,g^\ell)(w,t),\\
&\dfrac{\partial}{\partial t} g^k(v,t) = Q_L^k(g^k,g^k)(v,t)
,\qquad k = 1,\dots,M
\end{split}\ee
with $f(w,0)=f_0(w)$ and $g^k(v,0)=g_0^k(v)$. In \eqref{eq:system_Q} the collision operators $Q_{F}(\cdot,\cdot), Q_{FL}^{k}(\cdot,\cdot)$, $Q_L^k(\cdot,\cdot)$, and $Q_{LL}^{k\ell}(\cdot,\cdot)$
are defined as follows
\be\begin{split}
&Q_F(f,f) (w,t) =\\
&\quad \int_{\mathcal B^2}\int_{I} \left(^{\prime}B_F\dfrac{1}{J_F}f(^{\prime}w,t)f(^{\prime}w_*,t)-B_F f(w,t)f(w_*,t)\right)dw_*d\xi d\xi_*,
\end{split}\ee
where we indicated with $(^{\prime}w,^{\prime}w_*)$ the pre-interaction opinions given by $(w,w_*)$ after the interaction. The term $J_F=J_F(w;w_*)$ denotes as usual the Jacobian of the transformations $(w,w_*)\rightarrow (w^{\prime},w_*^{\prime})$. The kernels $^{\prime}B_F,B_F$ characterize the binary interaction and in following will be considered of the form
\be\label{eq:BF}
B_F= c_F\; \Theta_\xi(\xi)\Theta_\xi(\xi_*)\chi(|w^{\prime}|\le 1)\chi(|w_*^{\prime}|\le 1),
\ee
where $c_F>0$ is a scaling constant indicating the interaction frequency and $\chi(\cdot)$ is the indicator function. Similarly, we define for $k = 1,\dots,M$
\be\begin{split}\label{QFL}
&Q_{FL}^k(f,g^k)(w,t) =\\
&\quad \int_{\mathcal B^2}\int_I \Big({^\prime}  B_{FL}^k\dfrac{1}{ J_{FL}^k}f(^{\prime}w,t)g^k(^{\prime}v_*,t) -B_{FL}^k f(w,t)g^k(v_*,t)\Big)dv_*d\xi d\xi_*,
\end{split}\ee
where we indicate again with $J_{FL}^k$ the Jacobian term of the transformation $(w,v_*)\rightarrow (w^\prime,v_*^\prime)$ and the kernel $B_{FL}^k$ has the form 
\be
B_{FL}^k = c_{FL}^{k}\; \Theta_{\xi^k}(\xi^k)\Theta_{\xi^k}(\xi_*^k)\chi(|w^\prime|\le 1)\chi(|v_*^\prime|\le 1),
\ee
with $c_{FL}^k>0$ a scaling parameter. Finally, for each the leaders' population we define the operator $Q_L^k(\cdot,\cdot)$ as
\be\begin{split}
&Q^k_L(g^k,g^k)(v,t) =\\
&\quad \int_{\mathcal B^2}\int_{I} {^\prime B_L^k} \dfrac{1}{J_L^k}g^k(^{\prime}v,t)g^k({^{\prime}{v}}_*,t)-B_L^k g^k(v,t)g^k(v_*,t)dv_* d\eta d\eta_*,  
\end{split}\ee
with 
\be
B_L^k =c_L^k\Theta_{\eta^k}(\eta)\Theta_{\eta^k} (\eta_*)\chi(|v|\le 1)\chi(|v_*|\le 1),
\ee
and $c_L^k>0$ is the interaction frequency of the leaders of the $k$th population.

\subsection{Evolution of the mean opinion and variance}
In order to have an insight on the global behaviour of model \eqref{eq:system_Q} we study here, under simplified assumptions, the evolution of the momentum of the the followers' density $f(w,t)$ and leaders' densities $g^k(v,t)$.
Hence, we introduce the weak formulation of the system \eqref{eq:system_Q} for a given test function $\phi(\cdot,t)\in\mathcal{C}^2(I)$  at $t\ge 0$ as follows
\be\begin{aligned}\label{eq:WBOL_F}
&\dfrac{d}{d t}\int_{ I}\phi(w)f(w,t)dw =\\
&\quad \dfrac{c_F}{2} \Big< \int_{I^2}(\phi(w^\prime)+\phi(w_*^\prime)-\phi(w)-\phi(w_*)) f(w,t)f(w_*,t)dwdw_*\Big>\\
&\quad+\sum_{k=1}^M c_{FL}^k\Big<\int_{ I^2}(\phi(w^{\prime\prime})-\phi(w)) f(w,t)g^k(v,t)dwdv \Big>
\end{aligned}\ee
and for all $k=1,\dots,M$
\be\begin{split}\label{eq:WBOL_L}
&\dfrac{d}{d t}\int_{I}\phi(v) g^k(v,t)dv =\\
&\quad \dfrac{c_L^k}{2} \Big< \int_{I^2}(\phi({v}^\prime)+\phi({v_*}^\prime)-\phi(v)-\phi(v_*))
g^k(v,t)g^k(v_*,t)dvdv_*\Big>,
\end{split}\ee
for the initial densities $f_0(w),g_{0}^k(v)$, and where $\langle\cdot\rangle$ denotes the expectation with respect to the random variable. 
From the weak formulation of the Boltzmann model \eqref{eq:WBOL_F}-\eqref{eq:WBOL_L} we can derive the evolution of macroscopic quantities for the opinion by choosing as a test function $\phi(w)=1,w,w^2$. 

Taking $\phi(w)=1$ we obtain immediately the conservation of the total number of leaders and followers in the system \cite{PT2,T}. Whereas for $\phi(w) = w$ we recover the evolution of $m_F(\cdot)$ and $m_L^k(\cdot)$, defined in \eqref{eq:mF}, as solutions of the following system of $M+1$ differential equations
\[\begin{split}\label{eq:ME}
\dfrac{d}{dt} m_F(t) &= \dfrac{c_F\alpha}{2}\int_{I^2}[P(w,w_*)-P(w_*,w)](w_*-w)f(w,t)f(w_*,t)dw\; dw_*\\
&+\sum_{\ell=1}^M  c_{FL}^{\ell} \alpha \int_{I^2}R^{\ell}(w,v)(v-w)f(w,t)g^{\ell}(v,t)dvdw,\\
\dfrac{d}{dt} m_L^k(t) &= \dfrac{c_L^k\alpha}{2\rho^k}\int_{I^2}\Big\{[S^k(v,v_*)-S^k(v_*,v)](v_*-v)\\
& +4\sum_{\ell=1}^M u^{\ell}\Big\}g^k(v,t)g^k(v_*,t)dv\;dv_*.
\end{split}\]
Under the simplifying assumptions $P(\cdot,\cdot)$ and $S^k(\cdot,\cdot)$, for all $k=1,\dots,M$, symmetric, in the case $R^{\ell}\equiv 1$ for all $\ell=1,\dots,M$ and for a control term as in \eqref{eq:bin_avesol}, we reduce to the following differential system
\be\begin{split}\label{eq:system_eqdiff}
\dfrac{d}{dt}m_F(t) &= \sum_{\ell=1}^M c_{FL}^{\ell}\rho^{\ell} \alpha \left( m_L^{\ell}(t)-m_F(t) \right),\\
\dfrac{d}{dt}m_L^k(t) &= \dfrac{\beta}{1+(M-1)\beta}\left(c_L^k \rho^k \sum_{\ell=1}^M(\psi^\ell\vl+\mu^\ell m_F-  m_L^\ell)\right).
\end{split}\ee
If we further assume $\rho^k=\rho$, $c_L^k=c_{FL}^k=1/M$ for all $k=1,\dots,M$ the evolution of the average leaders' opinion $$\bar m_L(t)=\frac1{M}\sum_{k=1}^M m_L^k(t),$$ is explicitly computable from
\be\begin{split}\label{eq:mean_reduced}
\dfrac{d}{dt} m_F(t) &= \rho\alpha \left( \bar m_L(t)-m_F(t) \right)\\
\dfrac{d}{dt} \bar m_L(t)   &= \dfrac{\beta \rho}{1+(M-1)\beta} \left[ \frac1{M}\sum_{\ell=1}^M\left( \psi^\ell \vl +\mu^\ell m_F(t)\right)-\bar m_L(t)\right].
\end{split}\ee
Therefore, since $\mu^\ell=1-\psi^\ell$ both $m_F$ and $\bar m_L$ converge exponentially towards the asymptotic state 
\be
\bar v=\frac{\sum_{\ell=1}^M \psi^\ell \vl}{\sum_{\ell=1}^M \psi^\ell}.
\label{eq:asm}
\ee
As a result the average opinion of the followers is a weighted average of the different desired states of the leaders' groups.

The evolution of the second order moments $E_F(t)$, $E_L(t)$ can be obtained from \eqref{eq:WBOL_F}--\eqref{eq:WBOL_L} with $\phi(w)=w^2$, $\phi(v)=v^2$
\be\label{eq:EN}
\begin{split}
&\dfrac{d}{dt} E_F(t) =\\
&\quad \dfrac{c_F}{2} \Big<\int_{I^2} \Big(( w+\alpha P(w,w_*)(w_*-w)+\xi D_F(w))^2 +( w_*+\alpha P(w_*,w)(w-w_*)\\
&\quad +\xi D_F(w_*))^2-w^2-w_*^2\Big)f(w,t)f(w_*,t)dwdw_*\Big>\\
&\quad +\sum_{\ell=1}^M c_{FL}^\ell \Big< \int_{I^2}((w+\alpha R^\ell(w,v)(v-w)+\xi^k D_{FL}^\ell(w))^2-w^2)\\
&\qquad f(w,t)g^\ell(v,t)dwdv \Big>, \\
&\dfrac{d}{dt}E_L^k(t) = \dfrac{c_L^k}{2\rho^k} \Big< \int_{I^2} ((v+\alpha S^k(v,v_*)(v_*-v)+2\alpha \sum_{\ell=1}^M u^\ell+\eta^k D_L^k(v))^2\\
&\qquad +(v_*+\alpha S^k(v_*,v)(v-v_*)+2\alpha \sum_{\ell=1}^M u^\ell+\eta^k D_L^k(v_*))^2-v^2-v_*^2) \Big> \\
&\qquad g^k(v,t)g^k(v_*,t)dvdv_*.
\end{split}
\ee
Equation \eqref{eq:EN} together with \eqref{eq:ME} form a closed system for the evaluation of the second order moment of the asymptotic opinion of leaders and followers.

\section{An asymptotic Fokker--Planck game}\label{sect:FP}
In order to study the long-time behaviour of  the system \eqref{eq:system_Q} we introduce here the so-called {\em quasi-invariant scaling},  which allows to pass from a system of Boltzmann equations to a system of Fokker-Planck-type equations. This approach for socio-economic problems was firstly introduced in \cite{CPT05}, and subsequently developed for control problems in \cite{AHP, APZa}, and shares some similarities with the so called grazing collision limit of plasma physics, we refer to \cite{APTZ, PT2} for further discussions. 

\subsection{Fokker-Planck scaling}
Following \cite{APZa} we introduce the scaling parameter $\varepsilon>0$, and rescale as follows the interaction frequencies 
\begin{subequations}\label{eq:scaling}
\be\label{eq:scaling_1}
c_{F} \rightarrow \dfrac{1}{\varepsilon },\qquad c_{FL}^k\rightarrow \dfrac{c_{FL}^k}{\varepsilon\rho^k}, \qquad c^k_{L} \rightarrow \dfrac{1}{\varepsilon\rho^k},
\ee
the binary parameters 
\be\begin{split}\label{eq:scaling_2}
\alpha = \varepsilon, \qquad \beta^k=\dfrac{4\varepsilon^2}{\nu^k+4\varepsilon^2},\qquad  \nu^k\rightarrow \varepsilon\nu^k,\end{split}\ee
and the standard deviations for the noise
\be\begin{split}\label{eq:scaling_3}
\sigma_{\xi}\rightarrow \sqrt{\varepsilon}\sigma_{\xi},\qquad \sigma_{\xi^k}\rightarrow \sqrt{\varepsilon}\sigma_{\xi^k},\qquad\sigma_{\eta^k}\rightarrow \sqrt{\varepsilon}\sigma_{\eta^k}.
\end{split}\ee
\end{subequations}
Introducing the above scaling in the Boltzmann system \eqref{eq:WBOL_F}-\eqref{eq:WBOL_L}, and taking the quasi-invariant limit $\varepsilon\rightarrow 0$ we have the following result for the evolution of the density functions $f(w,t)$ and $g^k(v,t)$.
\begin{theorem}
Let us consider the Boltzmann system \eqref{eq:WBOL_F}-\eqref{eq:WBOL_L} under the quasi--invariant opinion scaling \eqref{eq:scaling}. Let the initial densities $f_0=f_0(w)$, $g_0^k=g_0^k(w)$, $k=1,\dots,M$ be probability measures. In the limit $\varepsilon \to 0$, the weak solutions of the Boltzmann system $f_{\varepsilon}(w,t)$, $g^k_{\varepsilon}(v,t)$, converge,  up to extraction of a subsequence, to the probability densities $f(w,t)$, $g^k(v,t)$. These densities are weak solution of the Fokker--Planck system
\be
\begin{split}\label{eq:FPfollower}
&\partial_t f(w,t) + \partial_w\left( \left(\mathcal P[f](w,t) +\sum_{\ell=1}^M c^\ell_{FL}\mathcal R^\ell[g^\ell](w,t)\right) f(w,t)\right) =\\
& \qquad\qquad\qquad\dfrac{\sigma_{\xi}^2}{2}\partial^2_w (D_F^2(w)f(w,t)) + \sum_{\ell=1}^M \dfrac{\sigma_{\xi^\ell}^2}{2}\partial^2_w ((D_{FL}^\ell)^2(w)f(w,t)) 
\end{split}
\ee
with
\be\begin{split}\label{eq:PRk_def}
\mathcal P[f](x,w,t) &= \int_{I}P(w,w_*)(w_*-w)f(w_*,t)dw_*,\\
\mathcal R^k[g^k](w,t)&=\dfrac{1}{\rho^k}\int_{ I}R^k(w,v)(v-w)g^k(v,t)dv,\qquad k=1,\ldots,M,
\end{split}\ee
and for each $k=1,\ldots,M$ 
\be
\begin{split}\label{eq:FPleader}
&\partial_t g^k(v,t)+\partial_v\left(\left( \mathcal S^k[g^k](v,t)+ \sum_{\ell=1}^M\mathcal  U[f,g^\ell](t)\right) g^k(v,t)\right)= \\
&\qquad\qquad\qquad\qquad\qquad\qquad\qquad\qquad\qquad\dfrac{\sigma_{\eta^k}^2}{2}\partial ^2_v \Big( (D^k_L(v))^2g^k(v,t) \Big),
\end{split}
\ee
with 
\be\begin{split}
\mathcal U [f,g^\ell](t) &= \frac{2}{\nu^\ell}\Big(\psi^\ell\vl+ \int_I\int_I ( \mu^\ell wf(w,t)-\dfrac{1}{\rho^\ell}vg^\ell(v,t))dvdw\Big) 
\end{split}\ee
\end{theorem}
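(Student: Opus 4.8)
The plan is to pass to the limit in the weak formulation \eqref{eq:WBOL_F}--\eqref{eq:WBOL_L} by a grazing--type asymptotic expansion. Fix a test function $\phi\in\mathcal C^2(I)$ and Taylor expand each post--interaction difference to second order, for instance for the follower--follower collision
\be
\phi(w')-\phi(w)=\phi'(w)(w'-w)+\tfrac12\phi''(w)(w'-w)^2+\mathcal E_\phi,
\ee
with $w'-w=\alpha P(w,w_*)(w_*-w)+\xi D_F(w)$, and analogously for the $w_*'$, $w''$, $v'$, $v_*'$ increments. I would first take the expectation $\langle\cdot\rangle$ over the noise, using that $\xi,\xi^k,\eta^k$ have zero mean and variances $\sigma_\xi^2$, $\sigma_{\xi^k}^2$, $\sigma_{\eta^k}^2$, so that the linear noise contribution drops and the quadratic one produces a factor $\sigma^2D^2$. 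Since $I=[-1,1]$ is compact and, by the preceding Proposition on the preservation of bounds, $f_\varepsilon$ and $g^k_\varepsilon$ remain probability measures on $I$, all moments are finite and uniformly bounded in $\varepsilon$, which provides the compactness needed at the end.

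Next I would insert the scaling \eqref{eq:scaling}. The key bookkeeping is that each collision frequency carries a $1/\varepsilon$ while $\alpha=\varepsilon$, so the first--order terms survive: the follower--follower drift collapses to $\int_I\phi'(w)\mathcal P[f]\,f\,dw$ and, after the analogous computation for the $w''$ and $v'$, $v_*'$ increments, one recovers the drifts $\mathcal R^k[g^k]$ and $\mathcal S^k[g^k]$ of \eqref{eq:PRk_def}, \eqref{eq:FPleader}. The deterministic second--order contributions scale like $\tfrac1\varepsilon\cdot\alpha^2=\varepsilon\to0$ and disappear. The genuinely diffusive contributions come from the quadratic noise terms: by \eqref{eq:scaling_2}--\eqref{eq:scaling_3} the factor $\tfrac1\varepsilon\cdot\sigma^2$ with $\sigma\to\sqrt\varepsilon\,\sigma$ stays $O(1)$ and yields $\tfrac{\sigma_\xi^2}{2}\int\phi''D_F^2 f$, and similarly the $\sigma_{\xi^\ell}^2$, $\sigma_{\eta^k}^2$ diffusions. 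Integrating by parts in $w$ (resp. $v$) moves the derivatives onto the densities and reproduces the right--hand sides of \eqref{eq:FPfollower} and \eqref{eq:FPleader}.

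The control requires separate care. Under \eqref{eq:scaling_2} one has $\beta^k=4\varepsilon/(\nu^k+4\varepsilon)=O(\varepsilon)$, so the off--diagonal entries of the matrix in \eqref{syscontrolm} vanish and $\mathbf B^\alpha\to Id$, hence $\bar B^\alpha_\ell\to1$ in \eqref{eq:sumB}. Substituting \eqref{eq:bin_ctrlB_sol_avem} into the leaders' drift, the term $2\alpha\sum_\ell u^\ell$ multiplied by the leaders' frequency stays $O(1)$ because $\beta^\ell/\varepsilon\to 4/\nu^\ell$, and in the limit it produces the nonlocal mean--field drift $\sum_\ell\mathcal U[f,g^\ell]$, where the averages $\mF$ and $m_L^\ell$ entering $\psi^\ell\vl+\mu^\ell\mF-m_L^\ell$ are identified with the first moments $\int_I w f\,dw$ and $\tfrac1{\rho^\ell}\int_I v g^\ell\,dv$.

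The main obstacle is to make this passage rigorous rather than formal, and I expect two points to need the most work. First, controlling the Taylor remainder $\mathcal E_\phi$: writing it as $\tfrac12(\phi''(\tilde w)-\phi''(w))(w'-w)^2$ for an intermediate point $\tilde w$, one uses that the increment is $O(\sqrt\varepsilon)$ (dominated by the noise) and that $\phi''$ is uniformly continuous on the compact $I$, so that $\tfrac1\varepsilon\langle|\mathcal E_\phi|\rangle\to0$; a parallel estimate disposes of the boundary/indicator contributions coming from the collision kernels. Second, extracting a convergent subsequence and passing to the limit in the quadratic and nonlocal terms: since the densities live on the compact $I$ and are uniformly tight, weak--$*$ compactness gives $f_\varepsilon\rightharpoonup f$, $g^k_\varepsilon\rightharpoonup g^k$ up to subsequences, and the boundedness and continuity of $P,R^k,S^k$ together with the linearity in the second factor allow passage to the limit in each bilinear term and in the moment--dependent control, identifying the limit as a weak solution of \eqref{eq:FPfollower}--\eqref{eq:FPleader}.
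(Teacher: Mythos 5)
Your proposal is correct and takes essentially the same route as the paper: the paper's proof consists precisely of your third paragraph --- under the scaling $\beta^k=4\varepsilon/(\nu^k+4\varepsilon)\to 0$ and $\beta^k/(2\alpha)\to 2/\nu^k$, so that $\mathbf{B}^\alpha$ reduces to the identity, $\bar B^\alpha_\ell\to 1$, and the best--reply term converges to $\mathcal U[f,g^\ell]$ --- while all the remaining steps (Taylor expansion of the test function, scaling bookkeeping for drift and diffusion, remainder and compactness estimates) are delegated to the standard quasi--invariant limit arguments of \cite{T,PT2}. You have simply carried out in detail what the paper dismisses as ``standard arguments,'' so there is no substantive difference in approach.
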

\proof
Note that in the Fokker-Planck model for the leaders the terms $\mathcal U[f,g^\ell](t)$ are obtained from \eqref{eq:bin_ctrlB_sol_avem} in the quasi-invariant limit. In fact, from \eqref{eq:scaling}, and for $\varepsilon\to 0$ we have 
 \begin{align}
 \beta^k = \frac{4\varepsilon}{\nu^k+4\varepsilon} \to 0,\qquad \frac{\beta^k}{2\alpha} = \frac{2}{\nu^k+4\varepsilon} \to \frac{2}{\nu^k}.
 \end{align}
Thus thanks to the last relation we can show that the matrix of system \eqref{syscontrolm}, where the various strategies $F^\alpha_\ell$ are computed, reduce to the identity matrix, since $\beta^k\to0$ for every $k=1,\ldots,M$ and therefore $\bar B^\alpha_\ell = 1$  for every $\ell=1,\ldots,M$.
The rest of the proof is based on standard arguments, and we refer to \cite{T,PT2} for more details.
\endproof

\subsection{Asymptotic states}
Thanks to its simpler structure, under some simplification assumptions, it is possible to compute explicitely stationary states of the Fokker-Planck system \eqref{eq:FPfollower}--\eqref{eq:FPleader}.
We look for steady states $\fs(w),\gs^k(w)$ solutions to the following system of equations
\be
\begin{split}\label{stationary}
&\partial_w\left(\left(\sigma_{\xi}^2 D_F^2(w) + \sum_{\ell=1}^M {\sigma_{\xi^\ell}^2} (D_{FL}^\ell)^2(w)\right)\fs(w)\right)=\\
&\qquad\qquad\qquad\qquad2\left(\mathcal P[\fs](w) +\sum_{\ell=1}^M c^\ell_{FL}\mathcal R^\ell[\gs^\ell](w)\right)\fs(w),
\\
& \partial_v \left(\sigma_{\eta^k}^2(D^k_L(v))^2\gs^k(v)\right)
=\\
&\qquad\qquad2\left(\mathcal S^k[\gs^k](v)+\sum_{\ell=1}^M\frac{2}{\nu^\ell}\left(\psi^\ell\vl+\mu^\ell m_{F,\infty}-m^\ell_{L,\infty} \right)\right)\gs^k(v).
\end{split}
\ee
In order to solve the above set of equations we assume all interaction functions to be unitary constants, i.e $P = S^k = R^k = 1$, for $k=1,\ldots,M$ and that the local diffusion functions have the form \cite{T}
\be
D_F(w)=D_{FL}^k(w) =D(w)=(1-w^2),\qquad D_L(v) = (1-v^2).
\label{eq:diff}
\ee
Thus, the previous system reads
\be
\begin{split}\label{eq:stationary}
&\left((m_{F,\infty}-w)+\sum_{\ell=1}^M c^\ell_{FL}(m_{L,\infty}^\ell-w)\right)\fs(w)=\cr
&\qquad\qquad\qquad\qquad\qquad\qquad\dfrac{\left(\sigma_{\xi}^2+ \sum_{\ell=1}^M {\sigma_{\xi^\ell}^2}\right)}{2}\partial_w\left(D(w)^2\fs(w)\right)
\\
&\left((m_{L,\infty}^k-v)+\sum_{\ell=1}^M\frac{2}{\nu^\ell}\left(\psi^\ell\vl+\mu^\ell m_{F,\infty}-m^\ell_{L,\infty} \right)\right)\gs^k(v)=\cr
&\qquad\qquad\qquad\qquad\qquad\qquad\dfrac{\sigma_{\eta^k}^2}{2} \partial_v \left(D_L^k(v)^2\gs^k(v)\right),
\end{split}
\ee
for all $k=1,\dots,M$. Finally, under the further assumptions $\nu^k=\nu=2M$, $ c_L^k=c_{FL}^k=1/M$ from \eqref{eq:mean_reduced} we have that $\bar m_{L,\infty}=m_{F,\infty}=\bar v$, with $\bar v$ given by \eqref{eq:asm}. Therefore, the system reduces to
\be
\begin{split}\label{eq:stationary2}
&\left(\bar v - w\right) \fs(w)=\dfrac{\left(\sigma_{\xi}^2+ \sum_{\ell=1}^M {\sigma_{\xi^\ell}^2}\right)}{4}\partial_w\left(D(w)^2\fs(w)\right)
\\
&\left[(m_{L,\infty}^k-v)+\frac1{M}\sum_{\ell=1}^M\psi^\ell(\vl-\bar v)\right]\gs^k(v)=\dfrac{\sigma_{\eta^k}^2}{2} \partial_v \left(D_L^k(v)^2\gs^k(v)\right),
\end{split}
\ee
which can be solved to give for $k=1,\dots,M$ 
\be
\begin{split}
f_{\infty}(w) &= \gamma_F (1+w)^{-2+\bar v/(2\sigma^2_F)}(1+w)^{-2-\bar v/(2\sigma^2_F)}
\exp\left\{ -\dfrac{1-\bar v w}{\sigma^2_F(1-w^2)} \right\},\\
g_{\infty}^k(v)& = \gamma^k_L (1+w)^{-2+b^k_L/(2\sigma_{\eta^k}^2)}(1+w)^{-2-b^k_L/(2\sigma_{\eta^k}^2)}
\exp\left\{ -\dfrac{1-w\; b^k_L}{\sigma_{\eta^k}^2(1-w^2)} \right\},
\end{split}
\label{eq:statio}
\ee
with $\sigma^2_F= \dfrac{\sigma_{\xi}^2+\sum_{\ell=1}^M \sigma^2_{\xi^\ell}}{2}$, 
$b_L =m_{L,\infty}^k+\dfrac1{M}\sum_{\ell=1}^M\psi^\ell(\vl-\bar v)$ and $\gamma_F,\gamma^k_L$ normalization constants. 

\begin{rmk}~
\begin{itemize}
\item The stationary solutions for the leaders in \eqref{eq:statio} are defined implicitly from the average asymptotic opinion $m_{L,\infty}^k$ which depends on the initial average opinions $m_L^k(0)$ and can be computed from \eqref{eq:mean_reduced}.   
\item Other choices of the local diffusion functions $D_F(w)$, $D_{FL}^k(w)$ and $D(w)$ originate different stationary solutions, we refer to \cite{T, PT2} for further details.
\end{itemize} 
\end{rmk}

\section{Boltzmann games with heterogeneous followers}\label{sect:FP_het}
Clearly, a more realistic model should take into account the presence of additional effects characterized by the heterogeneous nature of the followers' population. In particular, experimental literature in decision science focussed on the impact of knowledge in communication effectiveness and in group decision--making tasks being the attitude of the audience towards the communicators dependent on their perceived high/low credibility level, see \cite{BOLRRF,HW}. 

\subsection{Modelling heterogeneous knowledge}
To describe the heterogeneity of the followers' knowledge we proceed similarly to \cite{BT,PT1,PVZ} by introducing an additional scalar variable $x\in \RR_+$ which measures the level of knowledge. At each interaction an agent achieves a certain level of knowledge from a background, representing the social environment, given in terms of the random variable $z\in\RR_+$ with distribution $C(z)$ with bounded mean 
\[
\int_{\RR_+}  C(z)dz = 1, \qquad \int_{\RR_+}zC(z)dz = m_B.
\]
Moreover, when the agents interact they also exchange part of their knowledge with other agents. The resulting dynamics, at a binary level, is the following   
\be\begin{split}\label{eq:binary_follower_knowledge}
&x^{\prime} = (1-\alpha\lambda(x))x +\alpha \lambda_C(x)x_*+\alpha\lambda_B(x)z+\kappa x\\
&x_*^{\prime} = (1-\alpha\lambda(x_*))x_*+\alpha\lambda_C(x_*)x+\alpha\lambda_B(x_*)z+\kappa_* x_*,
\end{split}\ee
where $\kappa,\kappa_*$ are random variables with zero mean and finite second order moment $\sigma_{\kappa}^2$ taking into account the unpredictable changes of the process. We can easily prove the following result \cite{PT1,PVZ}
\begin{proposition}
Let $\lambda \in[\lambda_-,\lambda_+]$ with $\lambda_->0$, $\lambda_+<1$ and $\lambda_C(x),\lambda_B(x)\in[0,1]$ for all $x\in \RR^+$ then the post--interaction knowledge $x^\prime$ is still positive if $\kappa\ge -1+\lambda_+$. 
\end{proposition}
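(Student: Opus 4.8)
The plan is to track the sign of the coefficients in the update rule \eqref{eq:binary_follower_knowledge}, exploiting that all of the state and background variables are nonnegative. First I would isolate in $x^\prime$ the terms that multiply $x$, rewriting the first relation as
\[
x^\prime = \bigl[(1-\alpha\lambda(x))+\kappa\bigr]x + \alpha\lambda_C(x)x_* + \alpha\lambda_B(x)z.
\]
Since $\alpha>0$, $\lambda_C(x),\lambda_B(x)\in[0,1]$ and $x_*,z\in\RR_+$, the last two summands are nonnegative; because $x\ge 0$ as well, the whole expression inherits the sign of the bracketed coefficient. (For $x=0$ the claim is immediate, as $x^\prime$ then collapses to those two nonnegative terms.)

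Next I would bound the bracketed coefficient from below. Using $\lambda(x)\le\lambda_+$ gives $1-\alpha\lambda(x)\ge 1-\alpha\lambda_+$, and recalling that in the binary regime $\alpha=\Delta t/2\le 1$, the extremal situation is $\lambda(x)=\lambda_+$ together with the threshold value $\kappa=-1+\lambda_+$, where
\[
(1-\alpha\lambda_+)+(-1+\lambda_+)=\lambda_+(1-\alpha)\ge 0,
\]
the last inequality following from $\lambda_+>0$ (a consequence of $\lambda_-\le\lambda_+$ and $\lambda_->0$) and $\alpha\le 1$. Any admissible $\lambda(x)\le\lambda_+$ only enlarges $1-\alpha\lambda(x)$, and any $\kappa\ge -1+\lambda_+$ only enlarges the coefficient, so the bracket stays nonnegative throughout its admissible range. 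Combined with the nonnegativity of the remaining two terms this yields $x^\prime\ge 0$; the identical computation applied to the symmetric second relation in \eqref{eq:binary_follower_knowledge} gives $x_*^\prime\ge 0$.

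There is no genuine analytical obstacle here: the statement is a sign-monotonicity check, and the only point requiring care is the implicit use of $\alpha\le 1$, which is guaranteed by the smallness of the time step underlying the binary approximation and is exactly what makes the $\alpha$-free threshold $-1+\lambda_+$ sufficient (a sharper, $\alpha$-dependent condition $\kappa\ge -1+\alpha\lambda_+$ would work as well). I would also remark that the hypothesis $\lambda_+<1$ is not strictly needed for positivity, but it guarantees that the threshold $-1+\lambda_+$ lies strictly below $0$, so that the admissible noise range for $\kappa$ is nontrivial. The symmetry of the two update rules then reduces the entire verification to the single estimate above, and I would simply state the conclusion and omit the routine details, as is done for the earlier boundedness propositions.
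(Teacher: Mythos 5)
Your proof is correct and is essentially the argument the paper leaves implicit (the proposition is stated without proof, deferring to \cite{PT1,PVZ}, where the same coefficient-sign check appears): collect the terms multiplying $x$, bound $1-\alpha\lambda(x)+\kappa \ge \lambda_+(1-\alpha)\ge 0$ using $\lambda(x)\le\lambda_+$ and $\kappa\ge -1+\lambda_+$, and conclude from the nonnegativity of the $x_*$ and $z$ contributions. Your explicit flagging of the implicit hypothesis $\alpha\le 1$ (and the sharper $\alpha$-dependent threshold $\kappa\ge -1+\alpha\lambda_+$) is a sound refinement rather than a deviation.
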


In order to introduce the action of the knowledge's heterogeneity in the opinion  dynamics we consider a compromise function between followers which depends on both the agents' opinion and knowledge. A possible structure for this function is given by
\be\label{eq:P_def}
P(w,w_*;x,x_*) = H(w,w_*)K(x,x_*),
\ee
where $0\le H(\cdot,\cdot)\le 1$ is a positive compromise propensity depending only by the opinion variable and $0\le K(\cdot,\cdot)\le 1$ is a function taking into account the knowledge of the two interacting agents \cite{APTZ,APZc}. A natural choice for this function is the following (see \cite{PVZ})
\be\label{eq:def_cMC}
K(x,x_*) = \dfrac{1}{1+e^{a(x-x_*)}}, \qquad a>1,
\ee
modelling the interaction propensity in terms of the knowledge gap $x-x_*$ between the two interacting individuals. 

Furthermore, to model the leaders' credibility, we introduce an interaction function in the leader--follower dynamics
 $0\le R\le 1$ with the following form
\be\begin{split}\label{eq:def_R}
R^k(w,v;x)&=H(w,v)K(x,\Psi(|v-m_L^k(0)|)),\\
\Psi(|v-m_L^k(0)|)& = \dfrac{1}{(\varsigma +|v-m_L^k(0)|)^{\gamma}}, \quad \varsigma,\gamma>0,
\end{split}\ee
where $m_L^k(0)$ is the initial mean opinion of the $k$th population of leaders, see \eqref{eq:mF}, the function $\Psi:[0,2]\rightarrow \RR^+$ is a credibility index measuring the distance at a given time of the leader's opinion with respect to its initial position and the function $K(\cdot,\cdot)$ acts as described in \eqref{eq:def_cMC}. 

\subsection{Kinetic games with heterogeneous effects}
In order to introduce the Boltzmann--type system of equations defining the leaders' game with heterogeneous followers' dynamics we consider the distribution $f(x,w,t)$ of individuals with knowledge $x\in X\subseteq\RR_+$ and opinion $w\in[-1,1]$ at time $t\ge 0$. The evolution is then given by the following system of Boltzmann equations
\be\begin{split}\label{eq:system_Qk}
&\dfrac{\partial }{\partial t}f(x,w,t) = Q_F(f,f)(x,w,t)+\sum_{\ell=1}^M Q_{FL}^{\ell}(f,g^\ell)(x,w,t),\\
&\dfrac{\partial}{\partial t} g^k(v,t) = Q_L^k(g^k,g^k)(v,t)
,\qquad k = 1,\dots,M
\end{split}\ee
where the operators $Q_F(\cdot,\cdot)$, $Q_{FL}^\ell(\cdot,\cdot)$ are defined as follows
\[\begin{split}
Q_F(f,f)(x,w,t) = &\int_{\RR_+}\int_{\mathcal B^2}\int_{X}\int_{I} C(z)\Bigg({}^\prime B_F\dfrac{1}{J_F}f({}^\prime x,{}^\prime w,t)f({}^\prime x_*,{}^\prime w_*,t)\\
&\qquad\qquad-B_F f(x,w,t)f(x_*,w_*,t)\Bigg)dx_*\,dw_*\,d\xi\, d\xi_*\,dz \\
Q_{FL}^{\ell}(f,f)(x,w,t) = &\int_{\RR_+}\int_{\mathcal B^2}\int_{X}\int_{I} C(z)\Bigg({}^\prime B_{FL}^\ell\dfrac{1}{J_{FL}^\ell}f({}^\prime x,{}^\prime w,t)g^\ell({}^\prime v_*,t)\\
&\qquad\qquad-B_{FL}^\ell f(x,w,t)g^\ell(v_*,t)\Bigg)dv_*\,d\xi\, d\xi_*\, dz,
\end{split}\]
being now
$$B_F = c_F \Theta_{\xi}(\xi)\Theta_{\xi_*}(\xi_*)\chi(|w^\prime|\le 1)\chi(|w_*^\prime|\le 1)\chi(x^\prime\ge 0)\chi(x_*^\prime\ge 0)$$
 and 
$$B_{LF}^\ell = c_{FL}^\ell \Theta_{\xi^\ell}(\xi^\ell)\Theta_{\xi_*^\ell}(\xi_*^\ell)\chi(|w^\prime|\le 1)\chi(x^\prime\ge 0)\chi(|v_*^\prime|\le 1).$$

Similarly, under the quasi invariant scaling \eqref{eq:scaling}-\eqref{eq:scaling_3}, if we further rescale $\sigma_{\kappa}\rightarrow \epsilon\sigma_{\kappa}$ we obtain the following system of Fokker--Planck equations 
\be
\begin{split}\label{FPfollower}
&\partial_t f(x,w,t)+\Big[\partial_x \mathcal C[f](x,w,t)+ \partial_w \Big(\mathcal P[f](x,w,t)\\
&\quad +\sum_{\ell=1}^M c^\ell_{FL}\mathcal R^\ell[g^\ell](x,w,t)\Big) f(x,w,t)\Big] = \dfrac{\sigma_{\kappa}^2}{2}\partial_x^2(x^2f(x,w,t))\\
&\quad +\dfrac{\sigma_{\xi}^2}{2}\partial^2_w (D_F^2(w)f(x,w,t)) + \sum_{\ell=1}^M \dfrac{\sigma_{\xi^\ell}^2}{2}\partial^2_w ((D_{FL}^\ell)^2(w)f(x,w,t)) 
\end{split}
\ee
where $\mathcal P[\cdot]$ and $\mathcal R^k[\cdot]$ have been defined in \eqref{eq:PRk_def} and $\mathcal C[\cdot]$ is given by
\[
\mathcal C[f](x,w,t) = \int_{\RR^+}\int_{\RR^+\times I}(-\lambda x+\lambda_Cx_*+\lambda_Bz)f(x_*,w_*,t)C(z)dx_*dw_*dz. 
\]

\section{Numerical examples and applications}\label{sect:num}
We propose in the present section several numerical examples for the Boltzmann--type model \eqref{eq:system_Q} and \eqref{eq:system_Qk} in the quasi-invariant scaling \eqref{eq:scaling_1}-\eqref{eq:scaling_3} with $\epsilon=0.01$. All the results have been obtained though a direct Monte Carlo simulation of the dynamics, see \cite{PR,PT2} for a description of the methods. In all the numerical tests we assumed that $10\%$ of the population is composed by leaders, equally divided for each family. For clarity in all figures the leaders' profiles have been magnified by a factor $10$. The regularization terms of the controls have been fixed to $\nu^p=0.1$. 

We considered as local diffusion function $D(x,w)=1-w^2$, hence the diffusion does not act on the agents with extreme opinions. The random variables $\eta,\eta^A,\eta^B$ are uniformly distributed with scaled variances $\sigma_{\eta}^2$, $\sigma_{\eta^A}^2$, $\sigma_{\eta^B}^2$. The knowledge dynamics is characterized by $\lambda(x)=\lambda>0$, $\lambda_C(x)=\lambda_C>0$ and $\lambda_B(x)=\lambda_B$. The random variable $\kappa$ and $z$ are uniformly distributed. 
In all test cases we assume $P(\cdot,\cdot;\cdot,\cdot)$ of the form \eqref{eq:P_def} and $R(\cdot,\cdot;\cdot,\cdot)$ of the form \eqref{eq:def_R}. 

\begin{table}[t]
\centering
\caption{Computational parameters for the different test cases.}
\begin{tabular}{c|c|c|cccc|c|cc|ccc}
\hline
\hline
Test & $L_k$ & $S^k(\cdot)$ &$\vk$& $\psi^k$&$\nu^k$ &$\sigma_{\eta^k}$ &$F$&$P(\cdot)$& $\sigma_{\eta}$ &$R^k(\cdot)$&$c_{FL}^k$&$\sigma_{\eta^k}$\\
\hline
\hline
 I & $1$&1&    0.5& 0.5&0.1 &0.01& &\eqref{PRT1}&0.01&\eqref{PRT1}&0.1&0.01\\
    & $2$&1& -0.5& 0.5&0.1 &0.01&  &                     &      &\eqref{PRT1}&0.1&0.01\\
\hline
\hline
 II & $1$&1& -0.5& 0.05& (a)0.5/(b) 0.05&0.01&  &  \eqref{PRT2}&0.01 &\eqref{PRT2}&0.1&0.01\\
   & $2$&1&   0   & 0.5&   (a)0.5/(b) 0.15&0.01&  &                     &      &\eqref{PRT2}&0.1&0.01\\
   & $3$&1&   0.5& 0.95& (a)0.5/(b) 0.15&0.01&  &                     &      &\eqref{PRT2}&0.1&0.01\\\hline
\end{tabular}
\label{Tab1}
\end{table}

\subsection{Test I: Boltzmann game and Boltzmann control}
In order to validate the present differential game setting and emphasize the differences with respect to a pure control setting, as the one studied in \cite{APZa}, we compare the evolution of two opposite populations of leaders for system \eqref{eq:system_Q}. Therefore, we consider a symmetric configuration where leaders have the same type of strategies, $\psi^k=\mu^k=0.5$, with $k=\{1,2\}$ but with opposite desired opinions ${\bar v}^1=-{\bar v}^2 = 0.5$. We compare the dynamics of the Boltzmann game with the case of single control strategies applied to each leaders' population $u^k$ as in \cite{APZa}. We consider a uniform interaction kernel for the leaders, i.e. $S^k\equiv 1$, whereas for the followers a bounded confidence-type of interaction (for both follower-follower and follower-leader interactions), with the following choices
\begin{align}\label{PRT1}
P(w,w_*) = \chi(|w-w_*|<0.75),\qquad R^k(w,v) = \chi(|w-v|<0.75).
\end{align}
We summarize all computational parameters in Table \ref{Tab1}. At time zero the followers' population is uniformly distributed in the opinion space $I$, whereas the leaders are concentrated close to their opposite desired opinion,  $-\vk$. In Figure \ref{Fig1} we depict in the $I\times [0,T]$ frame the evolution of the densities, respectively, in the top row the leaders' density and in the bottom row the followers' density. Left and right columns corresponds respectively to the dynamics in the Boltzmann game, and in the Boltzmann control setting.
The evolutions show that in both cases followers  are steered to an equilibrium position between the two desired states ${\bar v}^k$, but with different intensities. In absence of leaders' competition,  on the right, the two leaders' populations switch position, since they start from a unpleasant configuration with respect to the desired one, on the other hand, on the left, the initial configuration is preserved due to the balancing effect of the control.
\begin{figure}[t]
\centering
\includegraphics[scale=0.28]{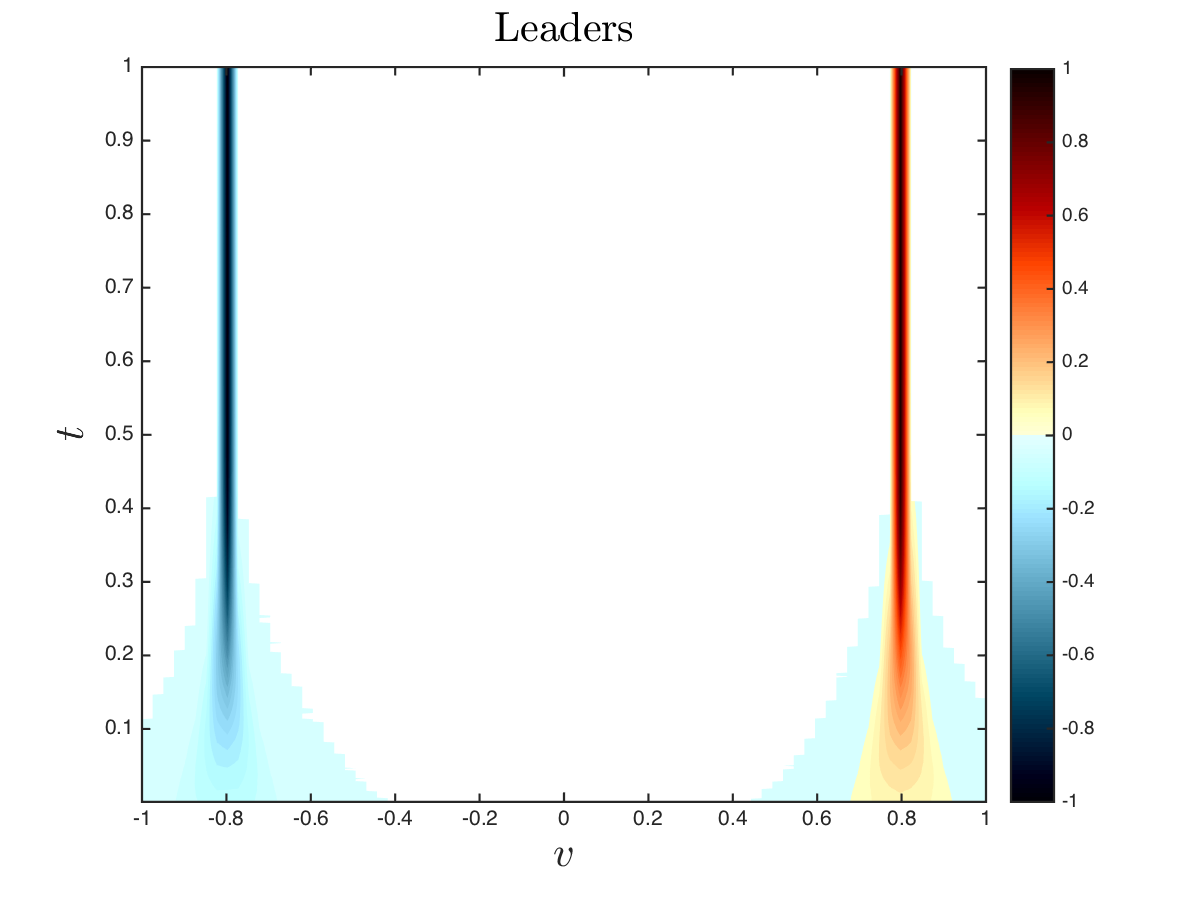}
\includegraphics[scale=0.28]{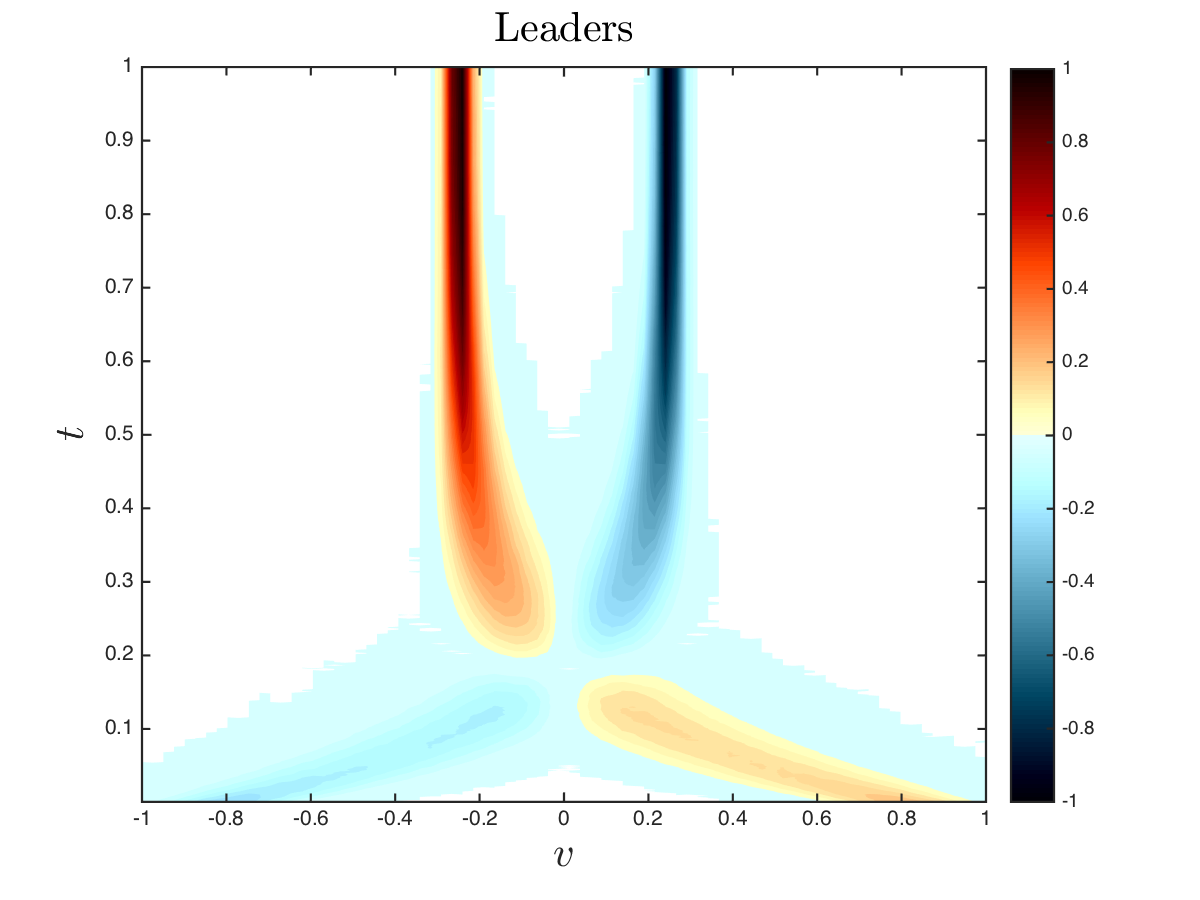}
\\
\subfigure[Boltzmann game: $\sum_{\ell=1}^M u^\ell$]{
\includegraphics[scale=0.28]{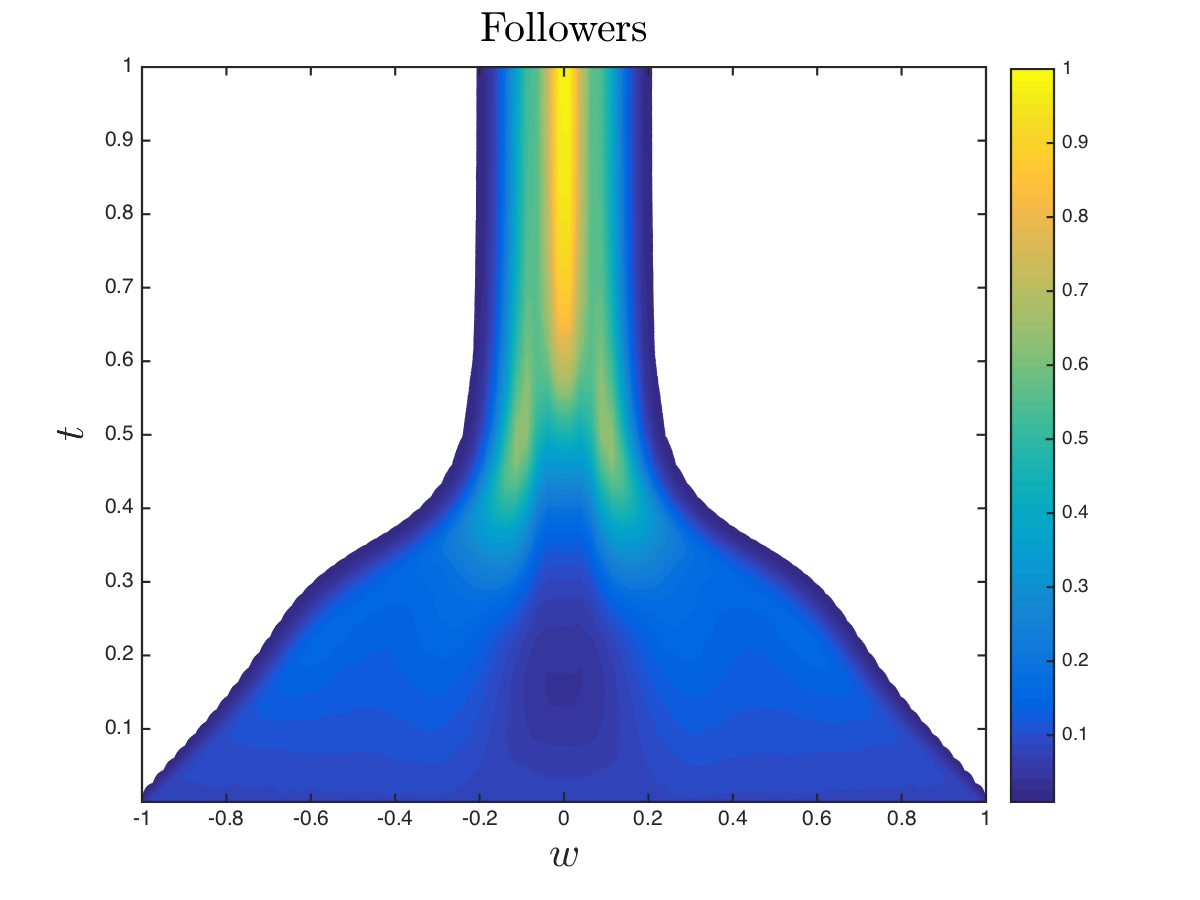}
}
\subfigure[Boltzmann control: $u^k$]{
\includegraphics[scale=0.28]{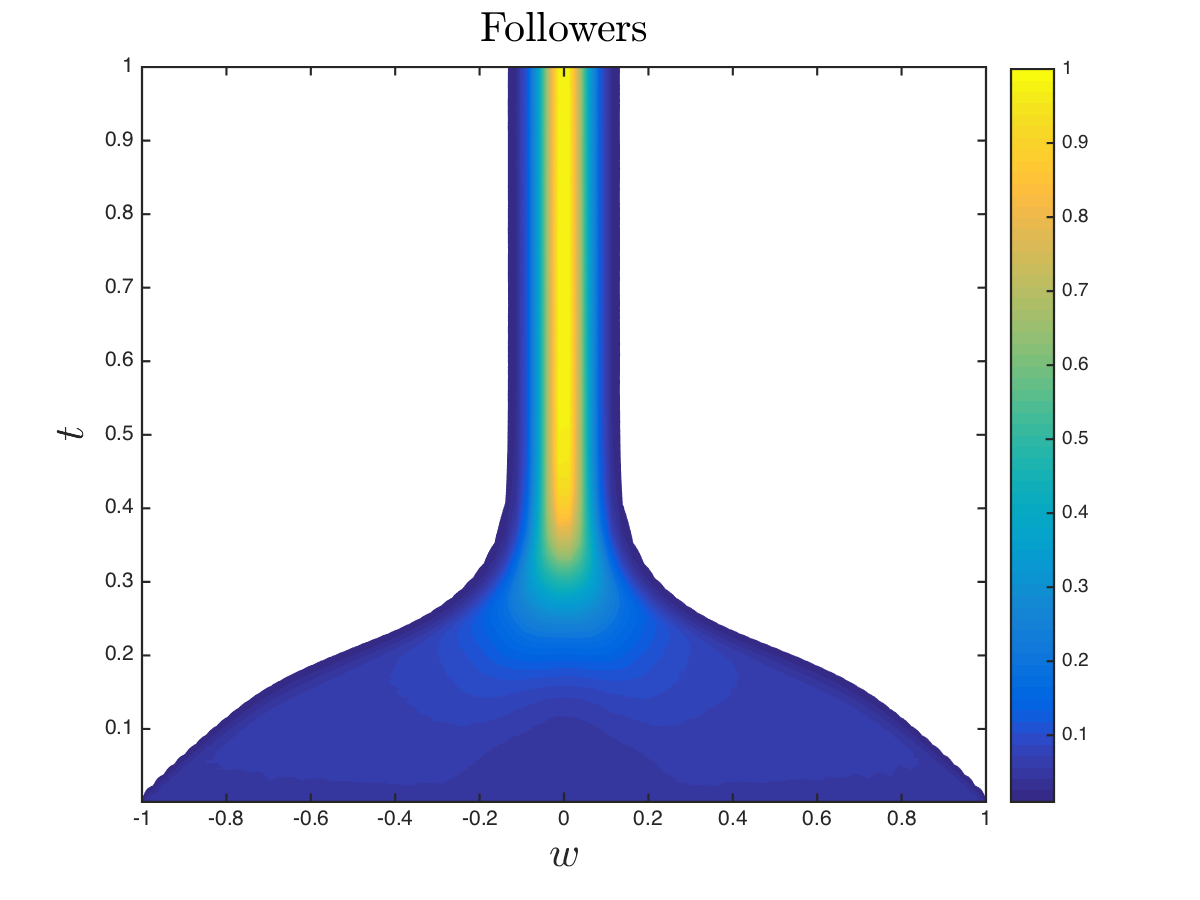}
}
\caption{Test I. Left column (a) shows the present Boltzmann game setting; right column (b) represents the evolution under a Boltzmann control dynamics. Top row represents the evolution of the two leaders' population densities,  bottom row shows the evolution of the followers' density under the influence of the leaders' strategies.}\label{Fig1}
\end{figure}

\subsection{Test II: Different strategies and resources}
We consider now the case of three leaders' populations with different strategies, and we compare two cases for system \eqref{eq:system_Q}: uniform and not-uniform availability of resources. 
Leaders' populations have three different strategies, the first one has target opinion $\bar v^1=-0.5$, and  a populistic approach, namely $\psi^1 = 0.05$, the second population has target opinion $\bar v^2 = 0$ and a balanced strategy with $\psi^2=0.5$, finally the last population has target opinion $\bar v^3=0.5$, with $\psi^3=0.95$ which represents a radical behaviour.  Similar to the previous test we assume a uniform interaction kernel for the leaders, i.e. $S^k\equiv 1$, whereas followers account a bounded confidence-type of interactions, with more restrictive interaction conditions with respect to \eqref{PRT1}
\begin{align}\label{PRT2}
P(w,w_*) = \chi(|w-w_*|<0.25),\qquad R^k(w,v) = \chi(|w-v|<0.25).
\end{align}
The initial density of followers, $f^0(w)$ is uniformly distributed on the opinion interval $[0,0.75]$, and the initial leaders' densities  are centered on their target opinion with normal distribution and standard deviation $\sigma^k =0.1$, as depicted in the first row of Figure \ref{Fig2}. In the second row we report the final time of two different situations, on the left hand-side we observe equal penalization of the control $\nu^1=\nu^2=\nu^3=0.5$, on the right hand-side the control is differently penalized, for populistic leaders, $L^1$, $\nu^1=0.05$, whereas moderate and radical leaders have penalization parameter $\nu^2=\nu^3=0.15$. These situations represent respectively the application of the control with uniform availability of resources, and the case where populistic strategies exhibit more strength in the application of their strategy. In the first case we observe in Figure \ref{Fig2} that followers' density concentrates around the moderate leaders $L^2$, in the second case their density split in two parts where a large part is centered between moderate and populistic leaders, and a tiny percentage collocates between  moderate and radical. 
It is interesting to observe that populist, in the second situation, are able to move a consistent part of the followers density towards their position, having at disposal larger resources. Finally, in Figure \ref{Fig3} we report the evolution of the leaders' densities in the first row, in the space-time frame,  $[-1,1]\times[0,T]$, and of the followers' density in the second row.

\begin{figure}[t]
\centering
\includegraphics[scale=0.3]{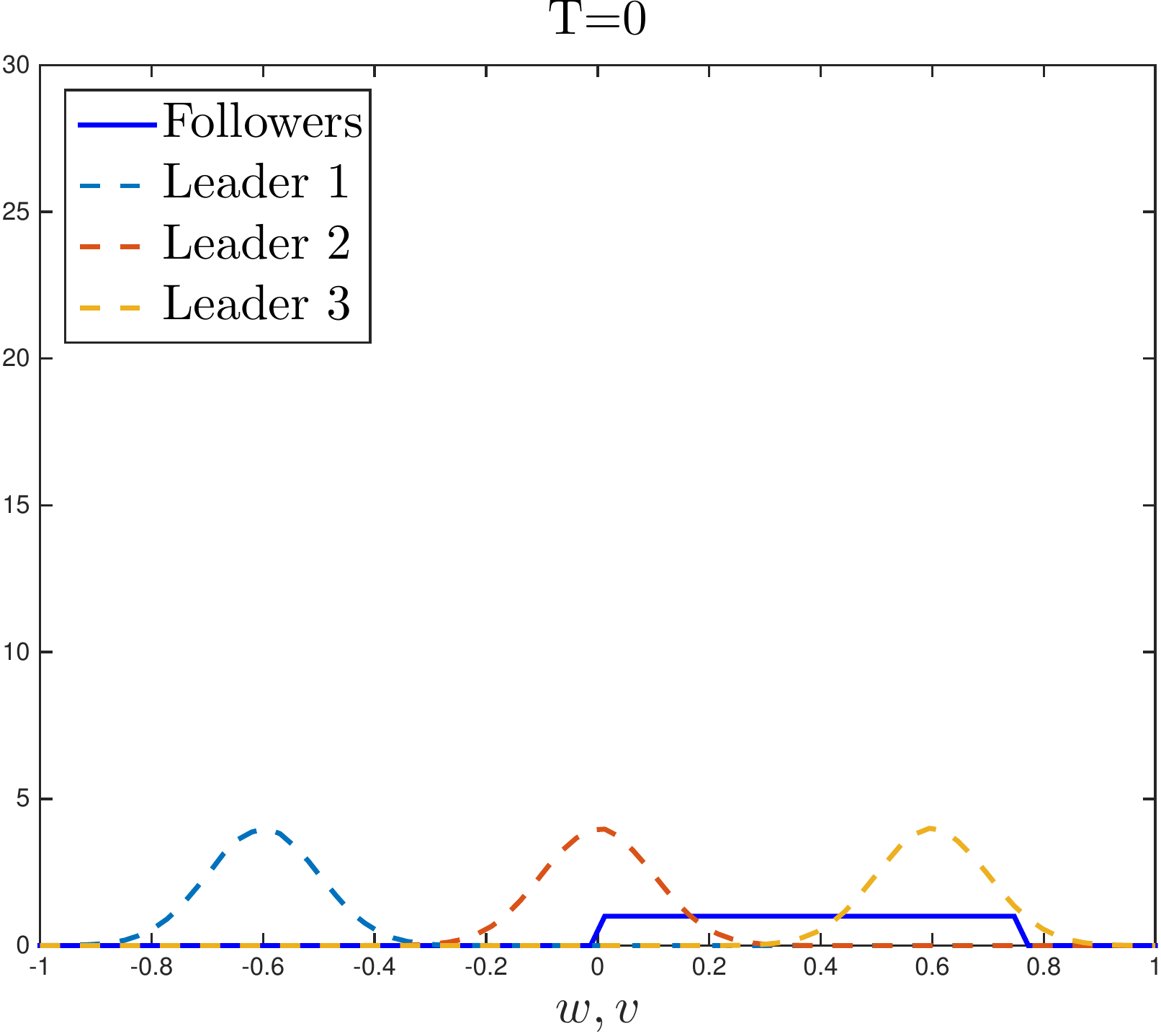}
\\
\subfigure[]{
\includegraphics[scale=0.3]{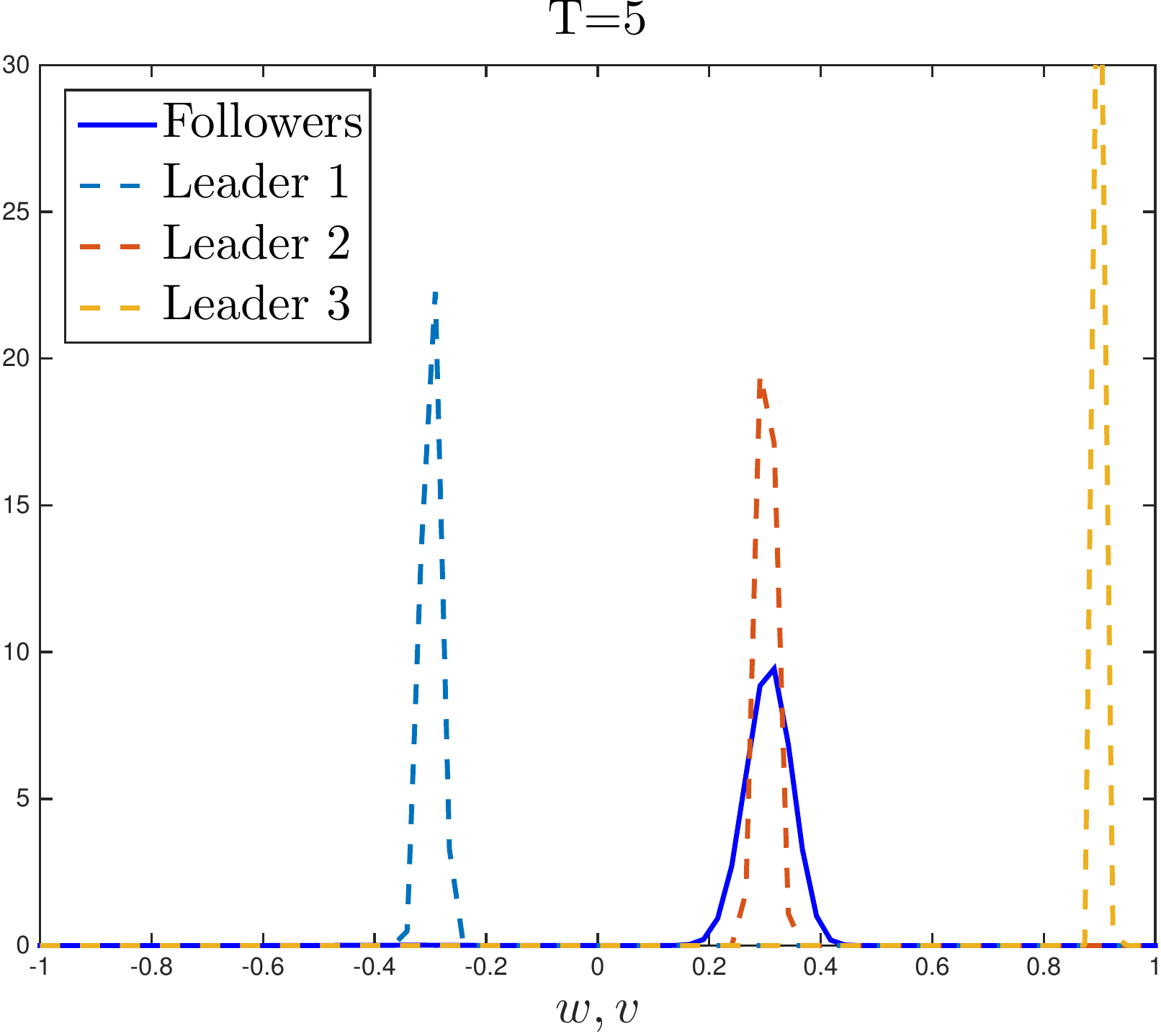}
}
\hspace{+0.5cm}
\subfigure[]{
\includegraphics[scale=0.3]{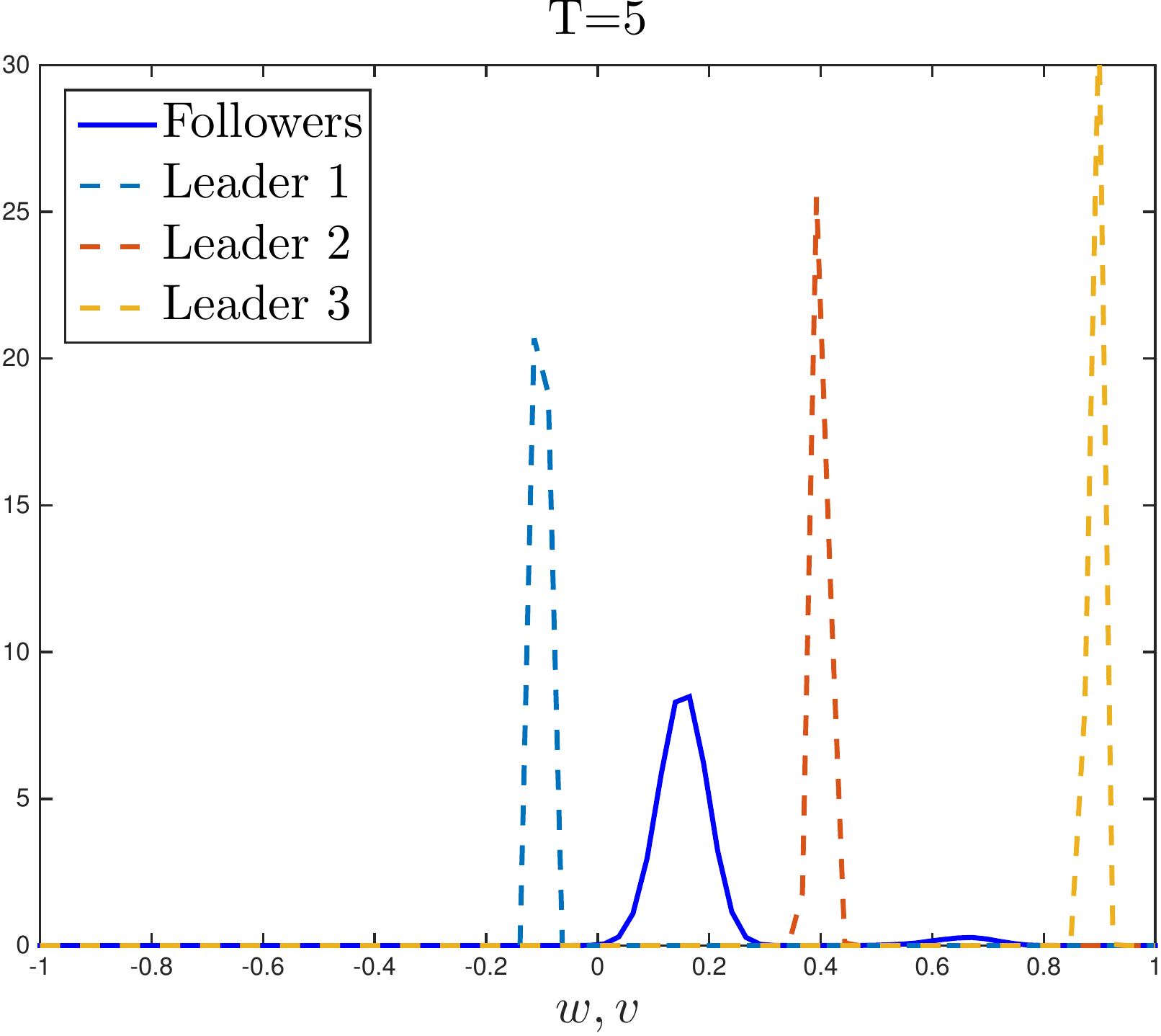}
}
\caption{Test II. First row shows the initial densities of followers and leaders at time $t=0$. Second row shows the final state of the evolution, on the left (a) the case with uniform resources, on the right (b) the case with different penalization of the control strategies.}\label{Fig2}
\end{figure}

\begin{figure}[t]
\centering
\includegraphics[scale=0.28]{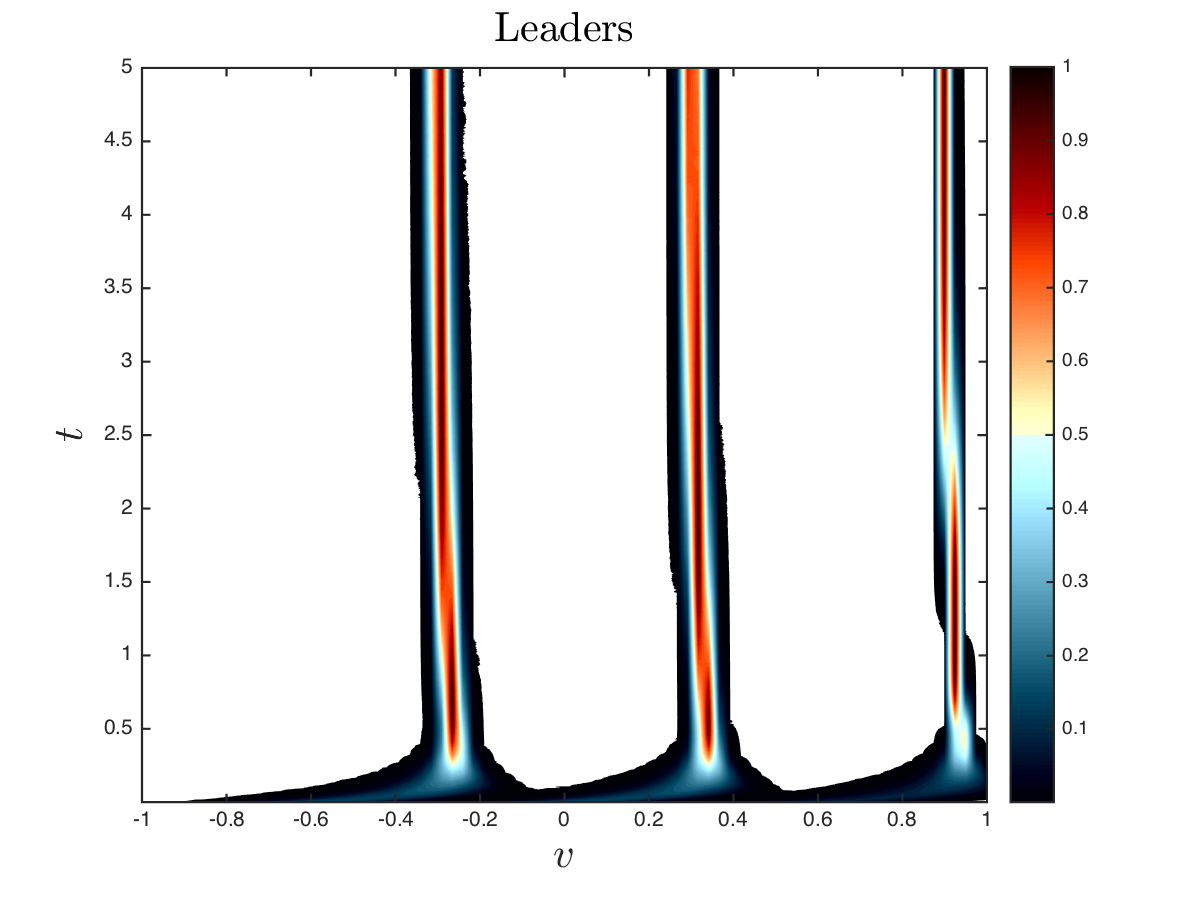}
\includegraphics[scale=0.28]{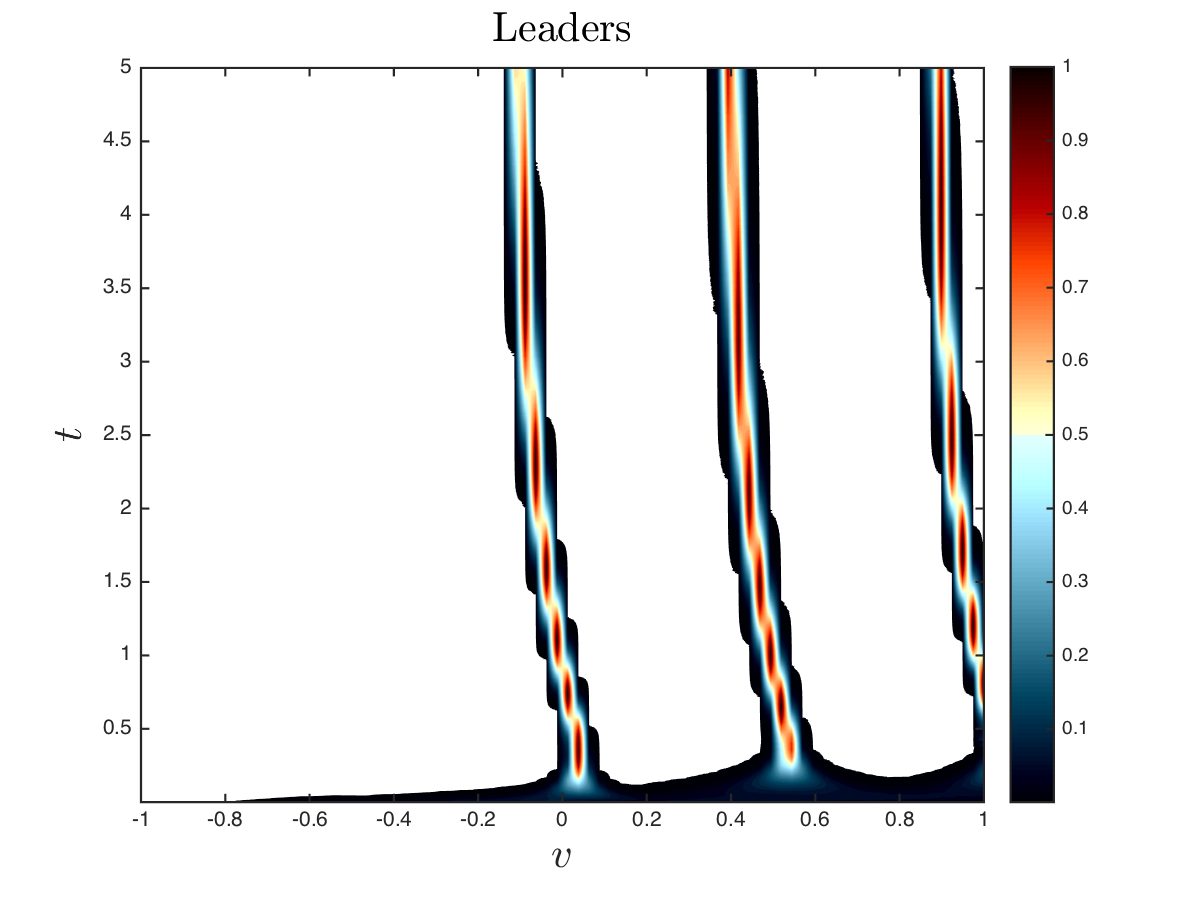}
\\
\subfigure[]{
\includegraphics[scale=0.28]{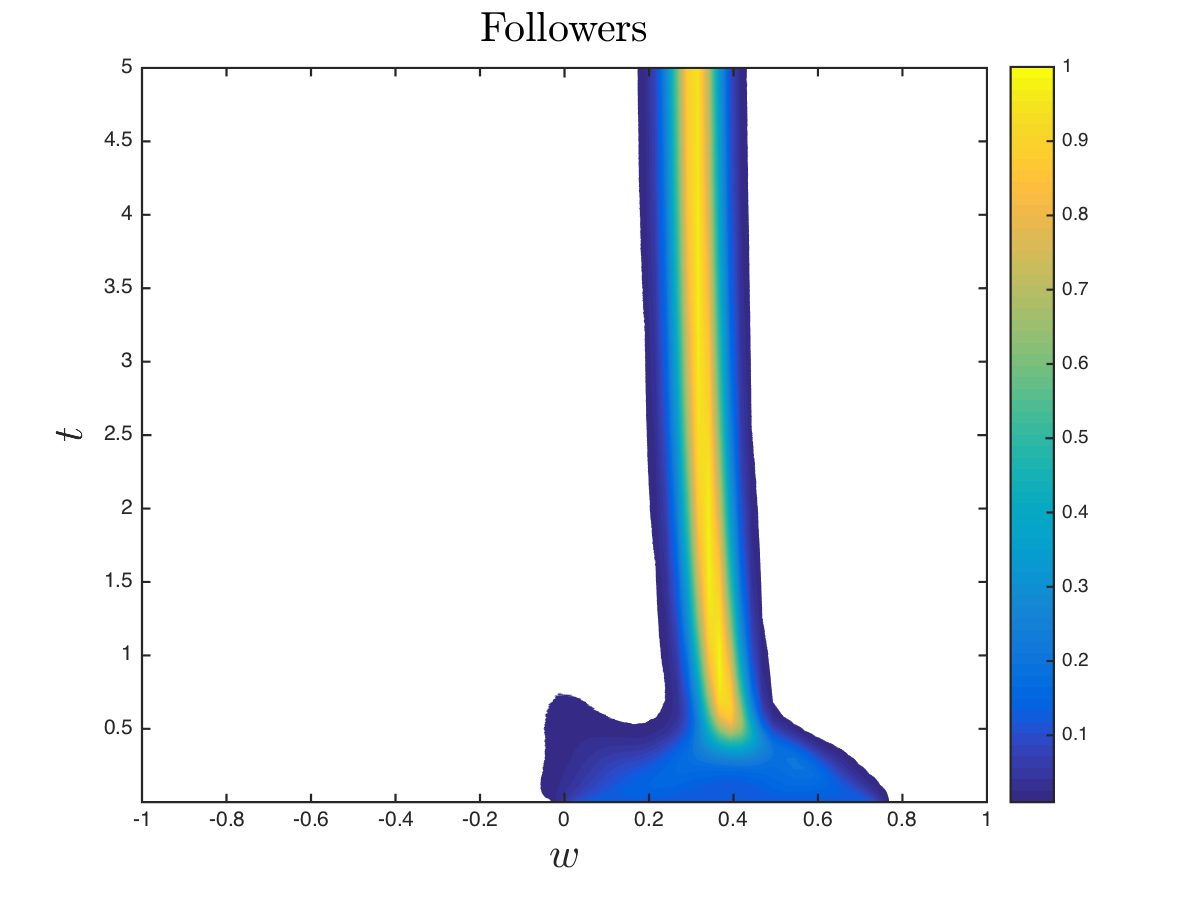}
}
\subfigure[]{
\includegraphics[scale=0.28]{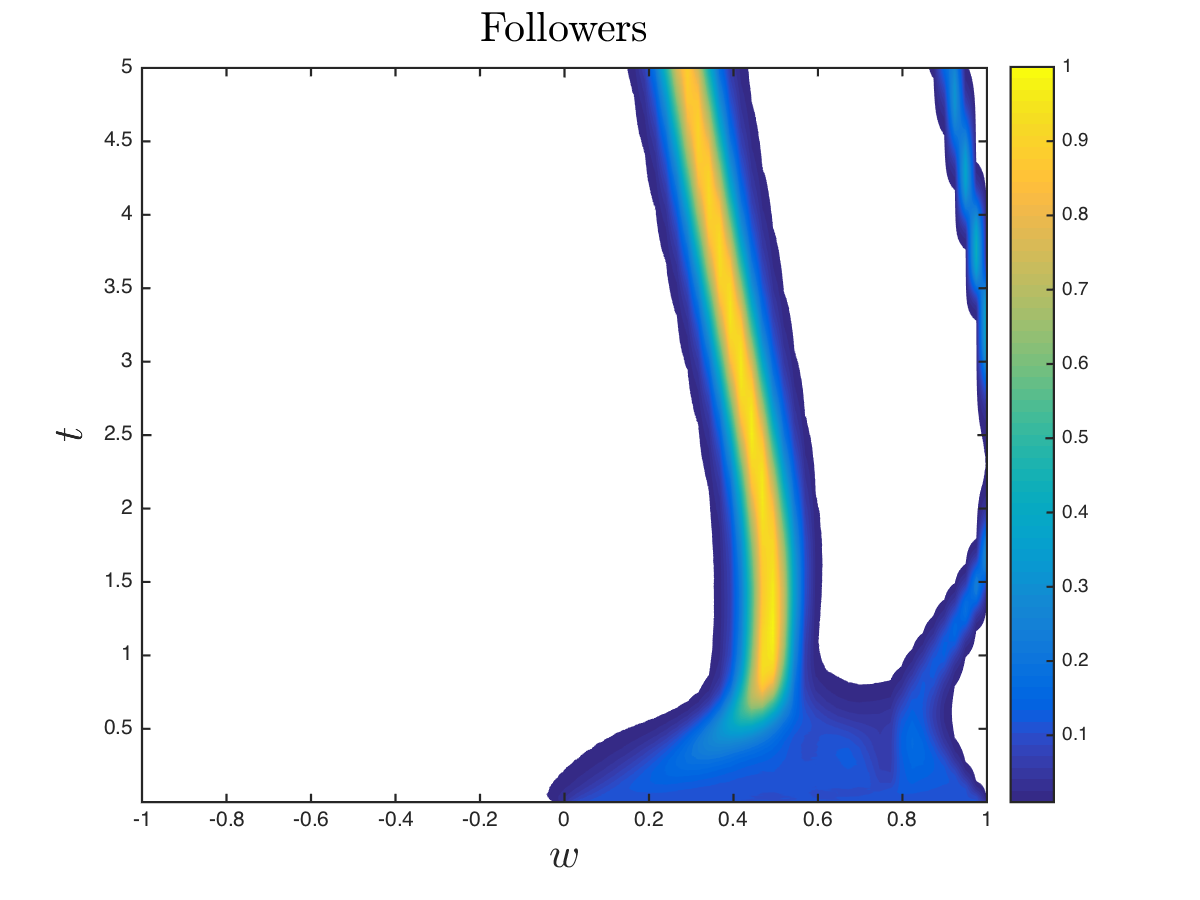}
}
\caption{Test II. Plot of the leaders' and followers' densities in the $(w,t)$-space.on the left (a) the case with uniform resources, on the right (b) the case with different penalization of the control strategies.}\label{Fig3}
\end{figure}
\newpage
\subsection{Test III: The heterogeneous case}
In this latter test we consider the case of heterogeneous followers' population, described by their opinion $w\in[-1,1]$ and an additional competence variable $x\in[0,+\infty]$. This latter variable acts in the way the followers interact with the leaders. In particular we consider the following interaction kernel for the followers-type interactions,
\begin{align}\label{PRT2}
P(w,w_*,x,x_*) = K(x,x_*),\qquad R^k(w,x,v^k) = K(x,\Psi(v^k)),
\end{align}
where the function $K(\cdot,\cdot)$ is defined according to \eqref{eq:def_cMC} with $a=50$,  and the credibility function $\Psi(\cdot)$ according to \eqref{eq:def_R} with parameters,  $\gamma = 0.75,\varsigma = 0.001$. We analyze a constrained dynamics where two populations of leaders compete in order to attract an heterogeneous followers' population. As initial condition we consider the case of uniform opinion distribution in the interval $[-1,1]$ and low knowledge, which evolves through interactions with other agents and through a background $z\sim \mathcal U([0,10])$. Furthermore, we consider in the knowledge dynamics $\lambda = 5\cdot10^{-3}$, $\lambda_B = 5\cdot 10^{-3}$, $\lambda = \lambda_B+\lambda$, $\sigma_{\kappa}=2.5\cdot 10^{-3}$. 
The detailed parameters for leaders' populations used in the numerical simulation are reported in Table \ref{Tab2}.

In Figure \ref{Fig3}--\ref{Fig4} we can observe how the leader with a stronger populist attitude is capable to drive the opinion of the followers with lower competence due to their low credibility level. On the other hand, the leader with a stronger radical attitude has a larger influence over the highly skilled followers. 

\begin{table}[t]
\centering
\caption{Computational parameters for Test III.}
\begin{tabular}{c|c|c|cccc|c|cc|ccc}
\hline
\hline
Test & $L_k$ & $S^k(\cdot)$ &$\vk$& $\psi^k$&$\nu^k$ &$\sigma_{\eta^k}$ &$F$&$P(\cdot)$& $\sigma_{\eta}$ &$R^k(\cdot)$&$c_{FL}^k$&$\sigma_{\eta^k}$\\
\hline
\hline
 III & $1$&1&    0.50& 0.1 & 0.5 &0.01& &\eqref{PRT2}&0.01&\eqref{PRT2}&0.1&0.01\\
    & $2$&1& -0.5& 0.75 & 0.1 &0.01&  &                     &      &\eqref{PRT2}&0.1&0.01\\
\hline
\end{tabular}
\label{Tab2}
\end{table}

\begin{figure}[t]
\centering
\subfigure[$T=0$]{\includegraphics[scale=0.4]{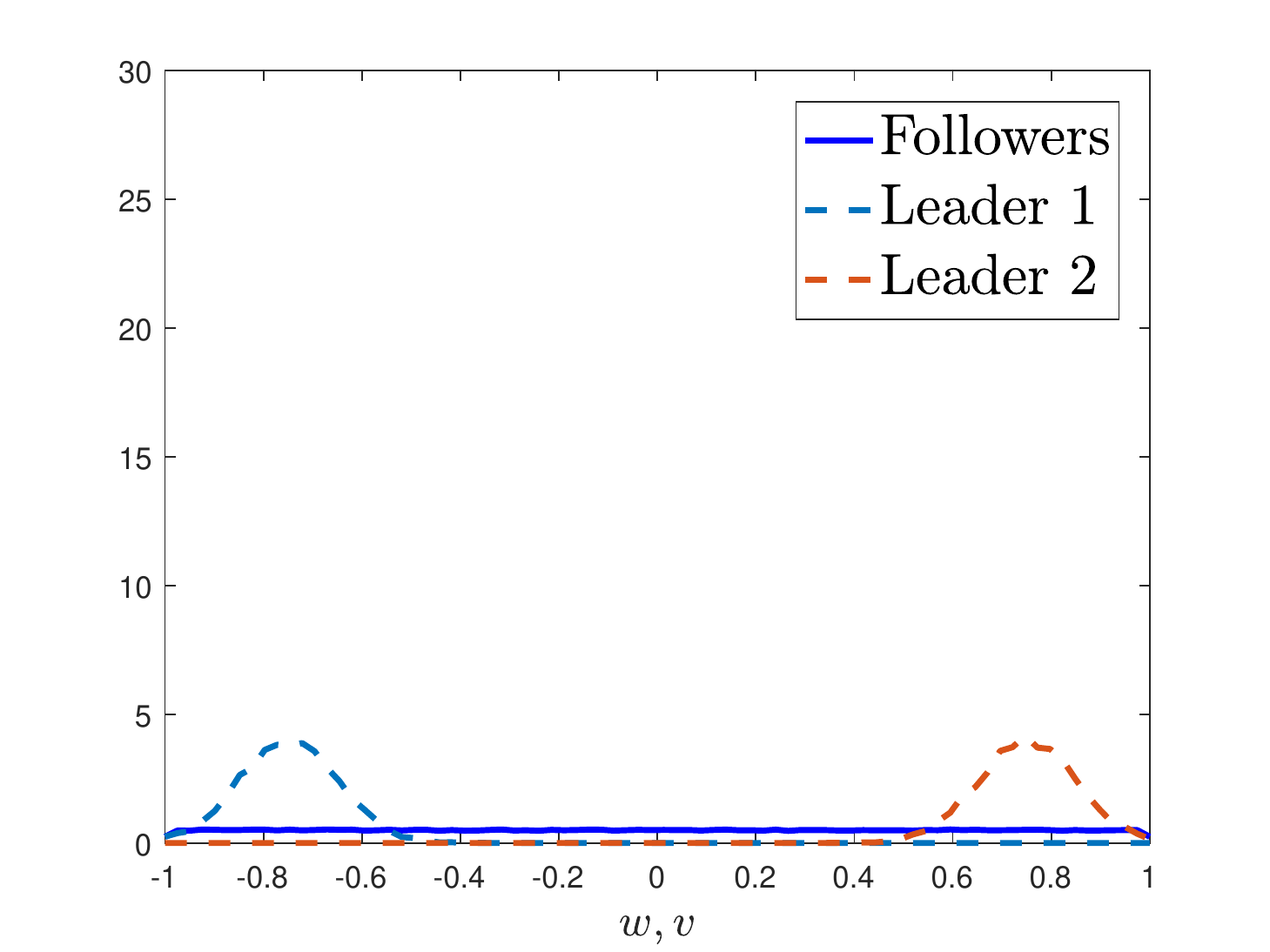}}
\subfigure[$T=1$]{\includegraphics[scale=0.4]{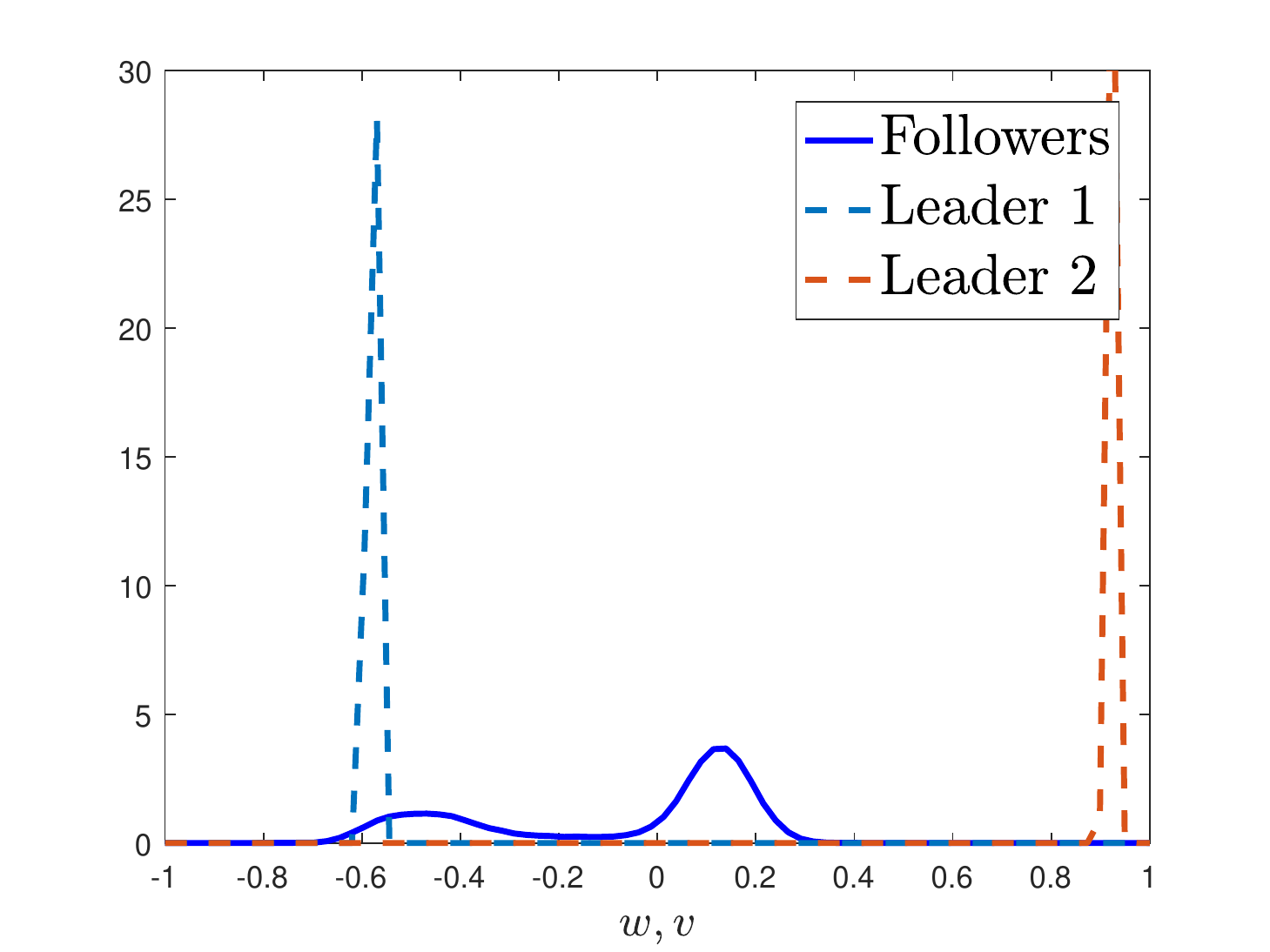}}\\
\subfigure[$T=5$]{\includegraphics[scale=0.4]{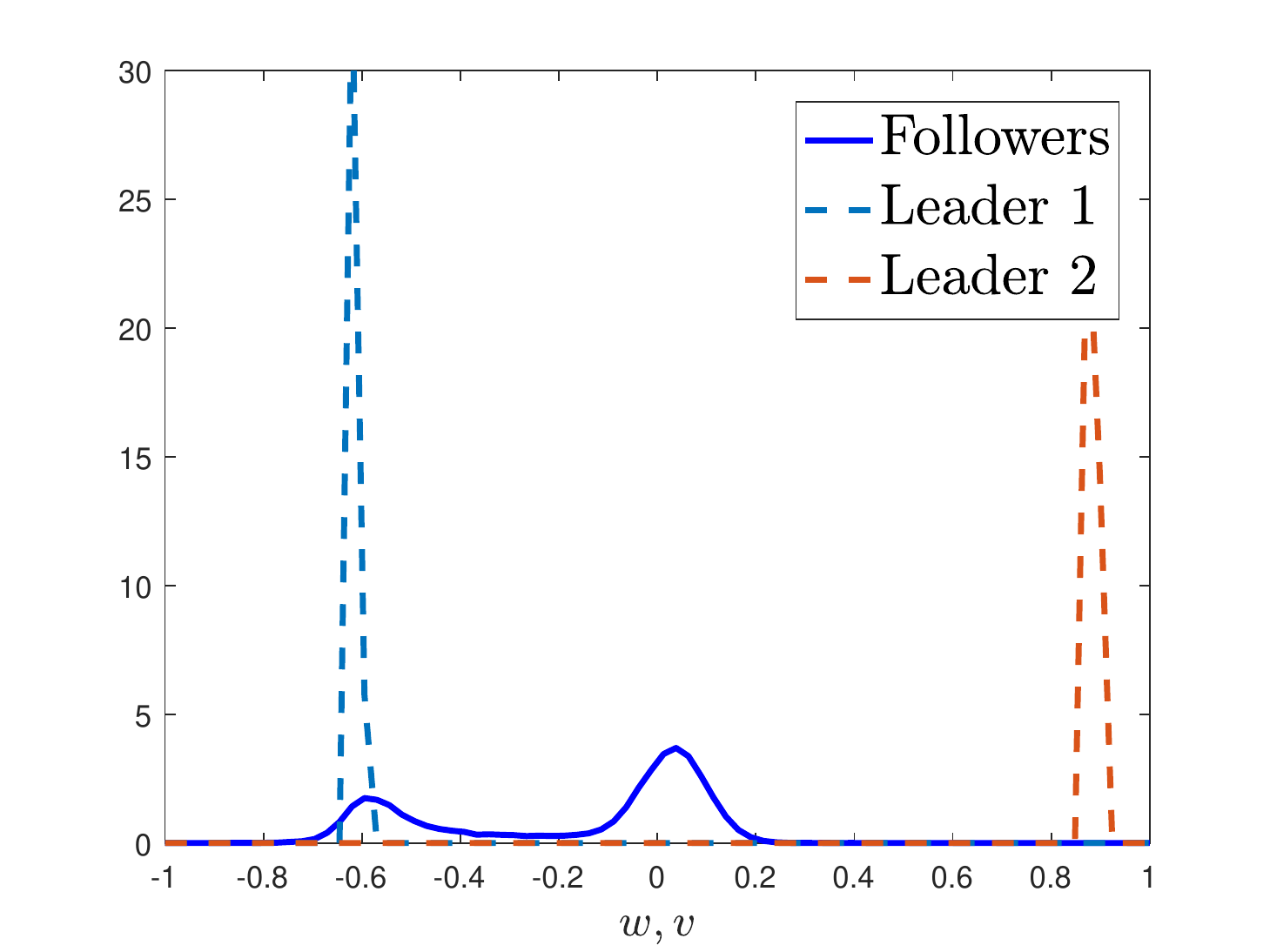}}
\subfigure[$T=10$]{\includegraphics[scale=0.4]{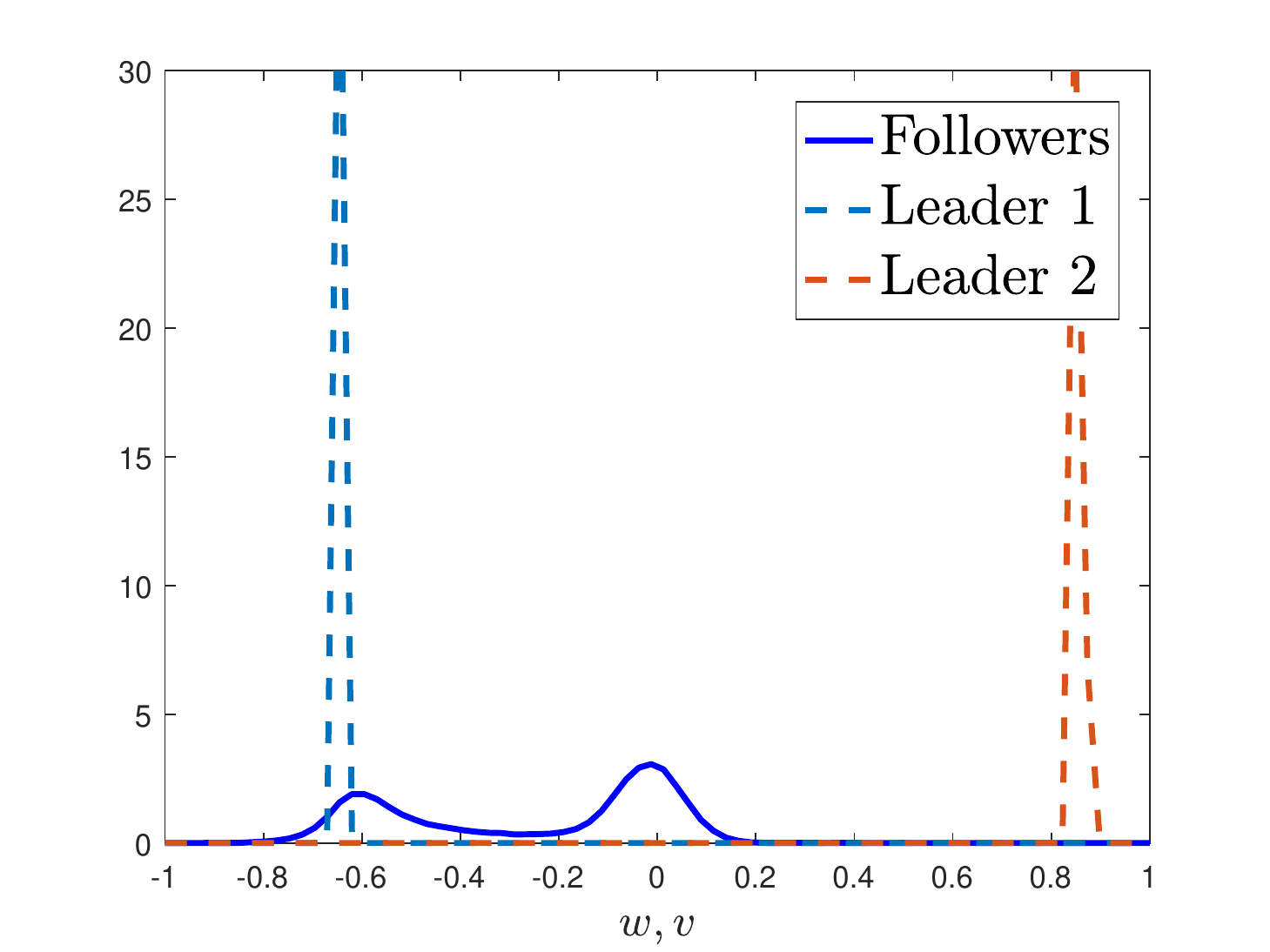}}
\caption{Test III. From left to right: evolution of the leaders-followers systems in the case of knowledge-based interactions. Leader 2's population has a radical strategy, $\psi^2=0.75$, whereas Leader 1's population has a populist attitude, $\psi^1 = 0.1$.}\label{Fig3}
\end{figure}

\begin{figure}[t]
\centering
\subfigure[$T=0$]{\includegraphics[scale=0.3]{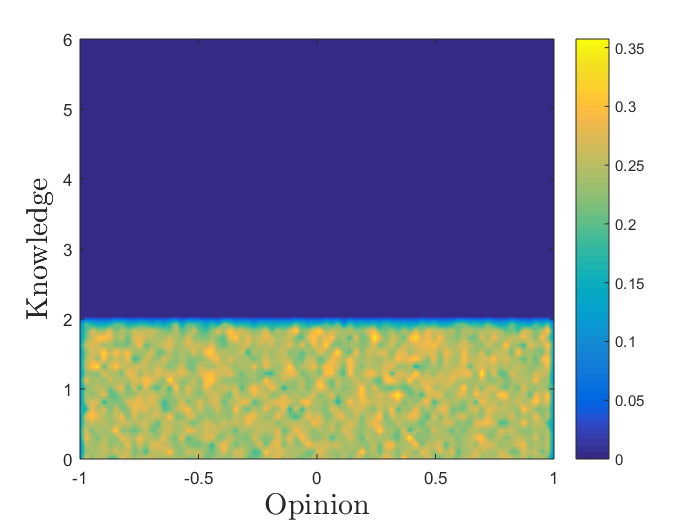}}
\subfigure[$T=1$]{\includegraphics[scale=0.3]{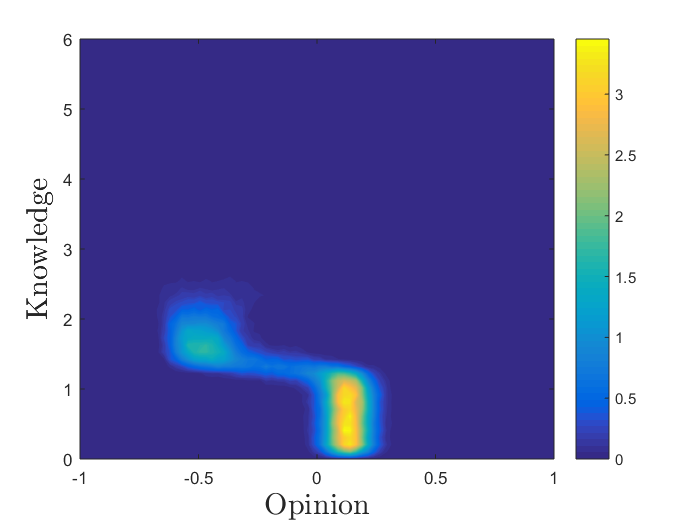}}\\
\subfigure[$T=5$]{\includegraphics[scale=0.3]{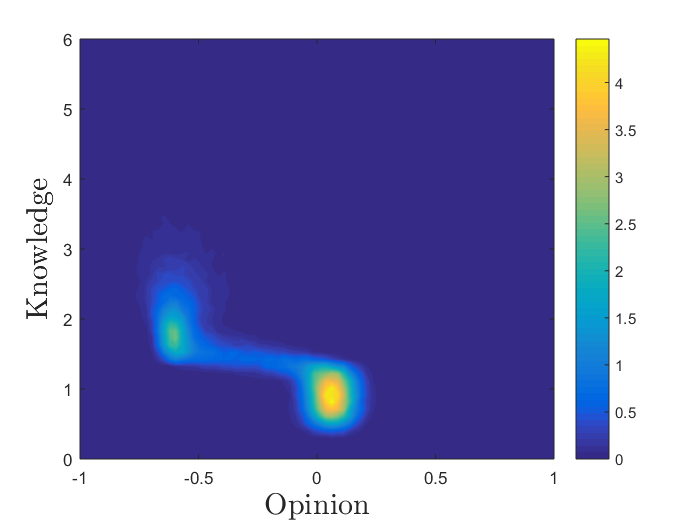}}
\subfigure[$T=10$]{\includegraphics[scale=0.3]{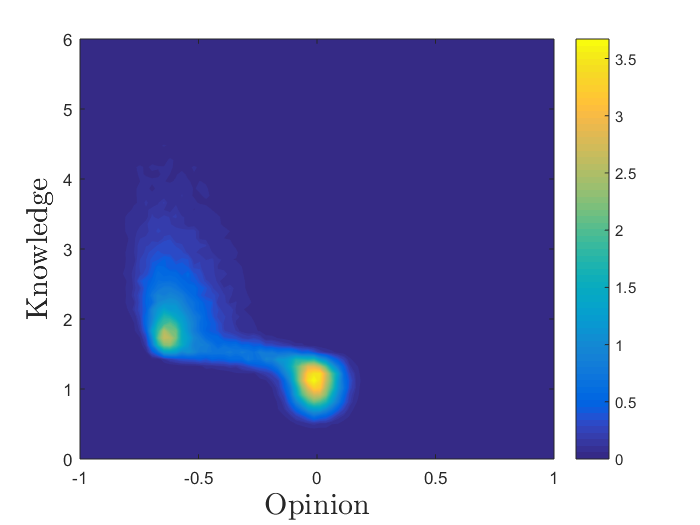}}
\caption{Test III. Evolution of the followers' density, $f(x,w,t)$, in the knowledge-opinion space. The uniform followers' density evolves in time towards a final state, which is biased towards the more credible leaders' population for higher level of knowledge, whereas for lower level of knowledge the populist and less credible leader is more attractive. 
}\label{Fig4}
\end{figure}

\section*{Conclusion}
A differential game involving different groups of leaders, each one with its own strategy, has been considered and studied in the case of a population of heterogeneous followers. The approach used is based on a model predictive approximation of the leaders' game once it has been reduced to binary interactions. This permits to obtain explicitly the best reply strategy for each leader and to write the corresponding Boltzmann system. Approximating the system through a Fokker-Planck equation yields, as usual, analytic stationary solutions for the corresponding opinion distributions. Numerical results show the strong impact of heterogeneity in the outcome of the leaders' competition.

\section*{Acknowledgments}
M. Z. acknowledges the "Compagnia di San Paolo" (Torino, Italy) for the support.


\end{document}